\newtheorem{theorem}{Theorem}
\newtheorem{lemma}{Lemma}
\newtheorem{claim}{Claim}
\newtheorem{proposition}{Proposition}
\newtheorem{assumption}{Assumption}
\newenvironment{proof}[1][Proof]{\begin{trivlist}
\item[\hskip \labelsep {\bfseries #1}]}{\end{trivlist}}
\newcommand{\qed}{\nobreak \ifvmode \relax \else
      \ifdim\lastskip<1.5em \hskip-\lastskip
      \hskip1.5em plus0em minus0.5em \fi \nobreak
      \vrule height0.75em width0.5em depth0.25em\fi}
      \def\an#1{{\color{red} #1}}
      \def\sml#1{{\color{blue} #1}}
      \def\dist{\mathrm{dist}}
\def\a{\alpha}
\def\e{\epsilon}
\def\Xc{\mathcal{X}}
\def\Fc{\mathcal{F}}
\def\EXP#1{{\mathsf{E}\left[#1 \right]}}
\def\la{\langle}
\def\ra{\rangle}
\title{\bf Distributed Random Projection Algorithm for Convex Optimization}
\author{Soomin Lee and Angelia Nedi\'{c}
\thanks{
S.~Lee is with the Electrical and Computer Engineering Department and A.~Nedi\'c is with
Industrial and Enterprise Systems Engineering Department at
the University of Illinois, Urbana, IL 61801;
e-mails: \text{\{lee203, angelia\}}@illinois.edu.
A. Nedi\'c gratefully acknowledges the support of this work by the NSF
through the awards CMMI 07-42538 and CCF 11-11342, and the Navy support under the grant N00014-12-1-0998.}}
\begin{document}
\maketitle
\begin{abstract}
\noindent
Random projection algorithm is of interest for constrained optimization when the constraint set is not known in advance or the projection operation on the whole constraint set is computationally prohibitive.
This paper presents a \textit{distributed random projection} (DRP) algorithm for fully distributed constrained convex optimization problems that can be used by multiple agents connected over a time-varying network,
where each agent has its own objective function and its own constrained set.
With reasonable assumptions, we prove that the iterates of all agents converge to the same point
in the optimal set almost surely.
In addition, we consider a variant of the method that uses a mini-batch of
consecutive random projections and establish its convergence in almost sure sense.
Experiments on distributed support vector machines demonstrate fast convergence of the algorithm.
It actually shows that the number of iteration required until convergence is much smaller
than scanning over all training samples just once.
\end{abstract}

\section{Introduction}

A number of problems that arise in sensor, wireless \textit{ad hoc} and peer-to-peer networks can be formulated as convex constrained minimization problems~\cite{rabbat,ram_info,johansson,con03}.
The goal of the agents connected over such networks is to cooperatively solve the following optimization problem:
\begin{align}\label{eqn:prob}
\min_{x} {}& f(x) = \sum_{i=1}^m f_i(x)\qquad \text{s.t. } x \in \mathcal{X} \triangleq \bigcap_{i=1}^m \mathcal{X}_i,
\end{align}
where each $f_i: \mathbb{R}^d \rightarrow \mathbb{R}$ is a convex function,
representing the local objective function of agent $i$,
and each $\mathcal{X}_i \subseteq \mathbb{R}^d$ is a closed convex set,
representing the local constraint set of agent $i$.
The complete problem information is not available at a single location. This is because i) there is no central node that facilitates computation and communication
and ii) it is often not possible for one agent to keep all the objective and constraint components due to memory, computational power, or privacy constraints.
In addition, the network topology itself may change with time due to agent mobility or link failures.
Therefore, an optimization algorithm for solving such problems must be distributed and robust, so that
each agent exchanges its information only with its immediate neighbors and the algorithm has to be adaptive to the changes in the network topology.

In this paper, we propose a \textit{distributed random projection} (DRP) algorithm
for problem (\ref{eqn:prob}), where the constraint set is defined as the intersection of
finitely many simple convex constraints.
That is, $\Xc_i = \bigcap_{j\in I_i}\Xc_i^j$, where $I_i$ is a finite\footnote{The finiteness of $I_i$
is not really crucial. The developed results also apply to the case when the index sets $I_i$
are infinite.}
(a formal definition of $I_i$ is in Section \ref{sec:algo}).
In our algorithm, each agent $i$ maintains its own iterate sequence $\{x_i(k)\}$.
At each iteration, each agent calculates weighted average of the received iterates (from its neighbors)
and its own iterate, adjusts the iterate by using gradient information of its local objective function $f_i$ and projects onto a constraint component that is selected randomly from its local constraint set $\Xc_i$.
The projections are performed locally by each agent based on the random observations of
the local constraint components. In particular, agent $i$ observes a constraint component
$\Xc_i^{\Omega_i(k)}$ at time $k$, where $\Omega_i(k) \in I_i$ is a random variable.

Our primary interest is in the case when the whole constraint set $\Xc_i$ for an agent $i$ is not known in advance, but its component is revealed through random realizations $\Xc_i^{\Omega_i(k)}$.
For example, in collaborative filtering for recommender systems,
user data is huge and distributed over multiple machines.
Users frequently change and update their preferences in real time
so the constraint set of this problem is usually not known in advance.
Another case of interest is when the whole constraint set $\Xc_i$
is known in advance but it has a huge number of components.
For example, in text classification problems, model parameters
are trained based on hundred thousands or more text samples and
each sample constitutes a constraint component (usually a halfspace)~\cite{Joachims:2006}.
In such a case, the projection operation on the whole constraint set $\Xc_i$ is computationally prohibitive
if any of the traditional (sub)gradient projection methods are used.
In Section \ref{sec:DrSVM},
we will experiment with Support Vector Machines to classify three text data sets.

Problem (\ref{eqn:prob}) can be solved by the incremental or
the Markov incremental algorithm, and the distributed subgradient algorithm.
In the Markov incremental algorithm studied in~\cite{Johansson2007,Ram:2009},
the agents maintain a single estimate sequence that is sequentially updated by one agent at a time.
When an agent receives the estimate, it updates the estimate using its local objective function and passes it to a randomly selected neighbor.
The update order is driven by a time inhomogeneous Markov chain (as the network topology is time varying).
Whereas in the distributed subgradient algorithms, each agent maintains its own estimate. It communicates the estimate with its neighbors and updates it using the local objective and constraint information. Algorithms of this type requires a consensus over all agents for convergence.
However, in some distributed problems it is important that each agent maintains a good estimate at all times.
For example, in a distributed online learning, each node is expected to perform in real time.
Our DRP algorithm is in the distributed subgradient algorithm category.

The related distributed optimization literature
includes~\cite{Sayed2008a,Sayed2010a,Sayed2006a,AN2007,SayedLopes2007,AN2009,ANquan2008,
Lobel2011},
which are concerned with convex but unconstrained problems,
and~\cite{Ram2010,Ram2012,Duchi2012} where constrained problems are considered.
The most relevant to the work in this paper
are~\cite{NO2010,SN2010cdc,SN2011,LobelOF2011} where, as in the DRP algorithm,
the constraint set is also distributed across agents and each agent handles its own constraint set only.
In~\cite{NO2010}, the convergence analysis is done for a special case when the network is completely connected. The work in~\cite{SN2010cdc,SN2011} extends the algorithm and its analysis
to a more general network including the presence of noisy links,
while~\cite{LobelOF2011} extends it to a general Markovian network model.
Unlike~\cite{NO2010} and~\cite{SN2010cdc}, where each agent can perform projections on
its entire constraint set, this paper addresses the case when such projections are not possible or computationally prohibitive.
Related to this work are also the distributed algorithms for estimation and inference problems
that have been proposed and studied
by Sayed et al.~\cite{Sayed2012,ChenSayed2012,TuSayed2012a,TuSayed2012b}.
On a much broader scale, the work in this paper is related to the literature on the consensus problem,
where each agent starts from an initial value 
and ends by converging to
a value common to all agents (see for example\cite{con01,con02,con03,con04,con05}).


The contribution of this paper is mainly in two directions.
First, we propose a novel distributed optimization algorithm that is based on local
communications of agents' estimates in a network and a gradient descent with random projections.
Second, we study the convergence of the algorithm and its variant using a mini-batch
of random projections.
To the best of our knowledge, there is no previous work on distributed optimization algorithms that
utilize random projections. Gradient and subgradient random projection algorithms
for {\it centralized} (not distributed) convex problems have been proposed in~\cite{AN2011}.
Finding probabilistic feasible solutions through random sampling of constraints for optimization problems with uncertain constraint sets have been proposed in \cite{tempo2009,Calafiore:2010}.
Also, the related work is the (centralized) random projection method for a special class of convex feasibility problems, which has been proposed and studied by Polyak \cite{Polyak2001409}.

The rest of the paper is organized as follows.
In Section \ref{sec:algo},
we introduce the problem of interest, formally describe our algorithm and state assumptions on the problem and network.
In Section \ref{sec:prelim}, we state some results from the literature that we use in the convergence analysis.
In Section \ref{sec:lem}, we derive two important results that will play crucial roles
in the convergence analysis.
In Section \ref{sec:conv}, we study the almost sure convergence property of our DRP algorithm.
We provide an extension of the algorithm to a variant that uses a mini-batch of random projections
and we state a convergence result for this extension in Section~\ref{sec:mini-batches}.
As an application of our DRP algorithm and its mini-batch variant, in Section~\ref{sec:DrSVM},
we introduce a linear SVM formulation, discuss how to apply the algorithm, and present some experimental results on binary text classification tasks.
Section \ref{sec:con} contains concluding remarks and future directions.

\noindent\textbf{Notation}
A vector is viewed as a column. We write $x^T$ to denote the transpose of a vector $x$. The scalar product of two vectors $x$ and $y$ is $\langle x, y \rangle$.
We use a subscript $i$ to denote an agent $i$.
An index $k$ with parentheses is devoted to represent a time.
For example, $x_i(k)$ is the iterate of an agent $i$ at time $k$.
We use $\|x\|$ to denote the standard Euclidean norm. We write $\text{dist}(x,\mathcal{X})$
for the distance of a vector $x$ from a closed convex set $\mathcal{X}$, \textit{i.e.},
$\text{dist}(x,\mathcal{X}) = \min_{v\in\mathcal{X}}\|v-x\|$. We use $\mathsf{\Pi}_{\mathcal{X}}[x]$
for the projection of a vector $x$ on the set $\mathcal{X}$, \textit{i.e.}, $\mathsf{\Pi}_{\mathcal{X}}[x] = \arg\min_{v\in\mathcal{X}}\|v-x\|^2$.
We use $\mathsf{Pr}\{Z\}$ and $\mathsf{E}[Z]$ to denote
the probability and the expectation of a random variable $Z$.
We abbreviate \textit{almost surely} and \textit{independent and identically distributed} as \textit{a.s.} and \textit{iid}, respectively.

\section{Problem Set-up, Algorithm and Assumptions \label{sec:algo}}
\subsection{Optimization over a Network}
We consider a constrained convex optimization problem~(\ref{eqn:prob}) that is distributed over a network
of $m$ agents, indexed by $V = \{1,\ldots,m\}$.
The function $f_i$ and the constraint set $\Xc_i$ in~\eqref{eqn:prob} are private information of agent $i$
(not shared with any other agent).
Collectively, the agents are responsible for solving problem~\eqref{eqn:prob}.

We are interested in the case when each constraint set $\Xc_i$ is the intersection
of finitely many closed convex sets.
Without loss of generality, let $\Xc$ be the intersection of $n$ closed convex sets.
Let $I = \{1,\ldots,n\}$ be the index set, and let $I_i$, $i \in V$, be a partition of $I$
(i.e., $I = \bigcup_{i=1}^m I_i$ and $I_i \cap I_j = \emptyset$ for $i \neq j$) such that
each $I_i$ is associated with the local constraint set $\mathcal{X}_i$ of agent $i$, i.e.,
\[\Xc_i=\cap_{j\in I_i} \Xc^j_i\qquad\hbox{for a finite index set $I_i$},\]
where the superscript is used to identify a component set.
Each component set $\Xc_i^j$ is assumed to be a "simple set" for the projection operation.
Examples of such simple sets include a halfspace
$\Xc_i^j=\{x\in \mathbb{R}^d\mid \langle a, x\rangle \leq b\}$,
a box $\Xc_i^j=\{x\in \mathbb{R}^d\mid \alpha \le x \le \beta\}$ (the inequality is component-wise)
and a ball  $\Xc_i^j=\{x\in \mathbb{R}^d\mid \|x-v\|\le r \}$, where $a,\alpha, \beta, v \in \mathbb{R}^d$ and $b,r \in \mathbb{R}$.
In such cases, the projection on the set $\Xc_i$ can be complex, especially when the number
of components is large, while the projection on each component $\Xc_i^j$ has a closed form expression.

We use the following assumption for the functions $f_i$ and the sets $\Xc_i^j$.

\begin{assumption} \label{assume:f} Let the following conditions hold:
\begin{enumerate}
\item[(a)] The sets $\mathcal{X}_i^j$, $j\in I_i$ are closed and convex for every $i \in V$.
\item[(b)] Each function $f_i:\mathbb{R}^d\to\mathbb{R}$ is convex.
\item[(c)] The functions $f_i$, $i \in V$, are differentiable and have
\textit{Lipschitz gradients} with a constant $L$ over $\mathbb{R}^d$,
\[
\|\nabla f_i(x) - \nabla f_i(y)\| \leq L \|x-y\| \quad \text{for all } x, y \in \mathbb{R}^d.
\]
\item[(d)] The gradients $\nabla f_i(x)$, $i\in V$ are bounded over the set $\mathcal{X}$, i.e.,
there exists a constant $G_f$ such that
\[
\|\nabla f_i(x)\|  \leq G_f \quad \text{for all $x \in\mathcal{X}$ and all $i\in V$}.
\]
\end{enumerate}
\end{assumption}

When each $f_i$ has Lipschitz gradients with a constant $L_i$, Assumption~\ref{assume:f}(c) is satisfied
with $L=\max_{i\in V} L_i$. Further note that
Assumption~\ref{assume:f}(d) is satisfied, for example, when $\mathcal{X}$ is compact.

As mentioned earlier, the agents are collectively responsible for solving problem~\eqref{eqn:prob},
without sharing their private knowledge of individual objective functions $f_i$
and the constrained sets $\Xc_i$. To accommodate such a task, the agents are assumed to form a
network, wherein each agent communicates its iterates to its local neighbors.
More specifically, at each time $k$, the network topology is represented by a directed
graph $G(k) = (V, E(k))$, where $E(k) \subseteq V \times V$. A link $(i,j)\in E(k)$ indicates that
agent $i$ has received information from agent $j$ at time $k$.
We let $N_i(k)$ denote the set of agents who send information to agent $i$,
i.e., $N_i(k)=\{j\in V\mid (i,j)\in E(k)\}$.
We assume that $i \in N_i(k)$ for all $i \in V$ and for all~$k$.

\subsection{Distributed Random Projection Algorithm (DRP)}
To solve the problem (\ref{eqn:prob}) with distributed information access, we propose an iterative gradient method with random projections.
Let $x_i(k) \in \mathbb{R}^d$ denote the estimate of agent $i$ at time $k$.
At time $k$, each agent sends the estimate to its neighbors (represented by the graph $(V,E(k))$.
Upon receiving the estimates $x_j(k)$ from its neighbors $j\in N_i(k)$, each agent $i$
updates according to the following two steps:
\begin{subequations}
\begin{align}
v_i(k) = {}&  \sum_{j\in N_i(k)} w_{ij}(k) x_j(k) \label{eqn:algo3}\\
x_i(k+1) = {}& \mathsf{\Pi}_{\mathcal{X}_i^{\Omega_i(k)}} \left[ v_i(k) - \alpha_k \nabla f_i(v_i(k))\right], \label{eqn:algo4}
\end{align}
\end{subequations}
where $\alpha_k > 0$ is a stepsize at time $k$ and $x_i(0) \in \mathbb{R}^d$ is an initial estimate of
agent $i$ (which can be random).

In the above, relation (\ref{eqn:algo3}) captures an information mixing step, while (\ref{eqn:algo4})
captures a local minimization and feasibility update step using a random projection.
In (\ref{eqn:algo3}), the iterate $v_i(k)$ is a weighted average of agent $i$'s estimate and
the estimates received from its neighbors $j\in N_i(k)$. Specifically, $w_{ij}(k)\ge0$ is a weight
that agent $i$ places on the estimate $x_j(k)$ received from a neighbor $j\in N_i(k)$ at time $k$,
where the total weight sum is 1, i.e., $\sum_{j\in N_i(k)} w_{ij}(k)=1$ for each agent $i$.
The step (\ref{eqn:algo3}) can be equivalently represented as
\begin{align}\label{eqn:algo3mix}
v_i(k) =  \sum_{j=1}^m [W(k)]_{ij} x_j(k)
\end{align}
by letting $w_{ij}(k) = 0$ for whenever $j \not\in N_i(k)$,
and using $[W]_{ij}$ to denote the $(i,j)$th entry of a matrix $W$.

In (\ref{eqn:algo4}), agent $i$ adjusts
the average $v_i(k)$ along the negative gradient direction of its local objective $f_i$.
At time $k$, agent $i$ also observes a random realization of its local constraint component set
$\mathcal{X}_i^{\Omega_i(k)}$.
To reduce the feasibility violation, it projects its current estimate on this set.
The random variable $\Omega_i(k)$ takes values in the index set $I_i$ at all times $k$.
In this way, instead of projecting onto the whole local constraint set $\mathcal{X}_i$, agent
$i$ projects only on a component set $\mathcal{X}_i^{\Omega_i(k)}$ which is randomly selected at time $k$.
Note that the updated estimate $x_i(k+1)$ may not lie in $\mathcal{X}_i$ since $\mathcal{X}_i \subset \mathcal{X}_i^{\Omega_i(k)}$.

Through the updates (\ref{eqn:algo3}) and (\ref{eqn:algo4}),
agents combine their information and consider their own optimization problem
of minimizing $f_i$ over the set $\Xc_i$.
There is neither a central node governing the whole process nor additional constraints enforcing consistency.
Nevertheless, with this simple update rule, our algorithm finds the optimal solution and all agents eventually arrive at a common optimal solution (all $x_i(k)$ converge to some $x^* \in \mathcal{X}^*$, as
shown in Section \ref{sec:conv}).

Note that algorithm (\ref{eqn:algo3})-(\ref{eqn:algo4}) is similar to the distributed projected
subgradient algorithm in~\cite{NO2010} except for the randomization over the components of the set
$\Xc_i$ in (\ref{eqn:algo4}).
At each iteration of the algorithm in~\cite{NO2010},
a projection is performed on the entire constraint set $\Xc_i$, which can be
prohibitively expensive when $\Xc_i$ is itself an intersection of many sets.
In addition, unlike the method in~\cite{NO2010},
DRP can also handle the cases when the projection on the entire set $\Xc_i$
is not possible since the set $\Xc_i$ may not be known in advance.

The challenges in convergence analysis of the DRP algorithm are posed mainly by its distributed nature,
through the {\it effects of the time-varying network}, and by the {\it projection errors} associated
with using projections on components
$\Xc_i^j$, $j\in I_i$ of the set $\Xc_i=\cap_{j\in I_i} \Xc_i^j$ instead of the projection on the set $\Xc_i$.
The fact that the DRP relies on a random component $\Xc_i^j$ poses particular difficulties,
as one needs to characterize the impact of the random projection errors, which is closely
related to errors in "set-approximations". To handle these difficulties, we make several mild assumptions.
We make an assumption on the random set processes $\{\Omega_i(k)\}$, $i\in V$,
that allows us to characterize the projection errors.
For the network we assume that it is sufficiently connected in order to properly conduct the information
among the agents. Finally, we assume that the agent weights are also properly chosen to ensure
that each agent is equally influencing every other agent. These network assumptions have
been typically used in distributed optimization algorithms over a time-varying network
(see e.g.~\cite{Tsiphd,Tsi1986,Bertsekas:1997,ANquan2008,AN2009,Ram2010}).
In the next subsections, we state our assumptions on the random set processes $\{\Omega_i(k)\}$, $i\in V$,
the network and the weight matrices $W(k)$.

\subsection{Assumptions on Random Set Process}
For the random sequences $\{\Omega_i(k)\}$, $i\in V$, we assume the following.
\begin{assumption}\label{assume:iid}
The sequences $\{\Omega_i(k)\}$, $i\in V$,
are \textit{iid} and independent of the initial random points $x_i(0)$, $i\in V$.
We have $\pi_i^j \triangleq \mathsf{Pr}\{\Omega_i(k) = j\} >0$ for all $j \in I_i$ and $i \in V$.
\end{assumption}
The variable $\Omega_i(k)$ can be viewed as a random sample at time $k$ of a random variable $\Omega_i$ that takes values $j \in I_i$ with probability $\pi_i^j$.
In some situations the probability distributions
$\pi_i$ may be dictated by nature and agent $i$ cannot control them.
In situations where the agents have all sets
$\Xc_i^j$, $j\in I_i$ available, each agent $i$ can choose a uniform distribution $\pi_i$ over the set $I_i$.

The next assumption is crucial in our analysis.
\begin{assumption}\label{assume:c}
For all $i\in V$, there exists a constant $c >0$ such that for all $x \in \mathbb{R}^d$,
\begin{equation}\label{eqn:c}
\mathrm{dist}^2(x,\mathcal{X}) \leq c \mathsf{E}\left[\mathrm{dist}^2(x,\mathcal{X}_i^{\Omega_i(k)})\right].
\end{equation}
\end{assumption}

Assumption \ref{assume:c} is satisfied, for example, when each set $\mathcal{X}_i^j$ is given by either linear inequality or a linear equality, or when the intersection set $\mathcal{X}$ has a nonempty interior.
In the first case, one can verify that the assumption holds  by using the results of Burke and Ferris on a set of weak sharp minima \cite{Burke93weaksharp}.
In the second case, one can use the ideas of the convergence rate analysis for the alternating projection algorithm of Gubin, Polyak and Raik in~\cite{Gubin19671}. In either case, the constant $c$ depends
on the probability distributions $\pi_i$ and some geometric properties of the sets.

\subsection{Assumptions on the Network and Weight Matrices}
We rely on the graphs $(V,E(k))$, $k\ge0$ to represent the time-varying network.
We make two assumptions.
\begin{assumption}\label{assume:nc}[Network Connectivity]
There exists a scalar $Q$ such that the graph
$\left(V,\bigcup_{\ell = 0,\ldots,Q-1} E(k+\ell)\right)$
is strongly connected for all $k\ge0$.
\end{assumption}
Assumption~\ref{assume:nc} ensures that the agents communicate
sufficiently often so that all functions and all constraints ($f_i$'s and $\mathcal{X}_i$'s)
influence the iterates of all agents.

Next, we make the following assumption on the edge weights (defined below~\eqref{eqn:algo3mix}).
\begin{assumption}\label{assume:ds}[Doubly Stochasticity]
For all $k \ge 0$,
\begin{enumerate}
\item[(a)] $[W(k)]_{ij} \ge 0$ and $[W(k)]_{ij}=0$ when $j \not\in N_i(k)$,
\item[(b)] $\sum_{j=1}^m W[(k)]_{ij} = 1$ for all $i \in V$,
\item[(c)] There exists a scalar $\eta \in (0,1)$ such that $[W(k)]_{ij} \ge \eta$ when $j \in N_i(k)$,
\item[(d)] $\sum_{i=1}^m [W(k)]_{ij} = 1$ for all $j \in V$.
\end{enumerate}
\end{assumption}
Assumption \ref{assume:ds}(a) states that the weights respect the network topology at any time $k$.
Assumption \ref{assume:ds}(b) means that each agent calculates
a weighted average of the estimates obtained from its neighbors.
Assumption \ref{assume:ds}(c) ensures that each agent gives sufficient weights on the information received.
Assumption \ref{assume:ds}(d) together with Assumption \ref{assume:nc} ensure that each agent is equally influential in the long run so that the agents arrive at a consensus on an optimal solution.

\section{Preliminaries  \label{sec:prelim}}
In this section, we state some definitions and results from the literature,
which will be used in later sections.

\noindent\textit{Convexity of Euclidean norm and its square.}
Both the Euclidean norm and its square are convex functions, i.e.,
for any vectors $v_1,\ldots,v_m \in \mathbb{R}^d$ and
nonnegative scalars $\beta_1,\ldots,\beta_m$ such that
$\sum_{i=1}^m \beta_i = 1$, we have
\begin{equation}\label{eqn:norm}
\left\|\sum_{i=1}^m \beta_i v_i \right\| \leq \sum_{i=1}^m \beta_i \|v_i\|,~
\left\|\sum_{i=1}^m \beta_i v_i \right\|^2 \leq \sum_{i=1}^m \beta_i \|v_i\|^2.
\end{equation}

\noindent\textit{Non-expansive projection property.}
We state a projection theorem (see \cite{BNO} for its proof).
\begin{lemma}\label{lem:proj}
Let $\mathcal{X} \subseteq \mathbb{R}^d$ be a nonempty closed convex set.
The function $\mathsf{\Pi}_{\mathcal{X}}: \mathbb{R}^d \rightarrow \mathcal{X}$ is continuous and nonexpansive, i.e.,
\begin{enumerate}
\item[(a)]
$\|\mathsf{\Pi}_{\mathcal{X}}[x]-\mathsf{\Pi}_{\mathcal{X}}[y]\| \leq \|x-y\|
\quad \hbox{for all }x,y \in \mathbb{R}^d. $
\item[(b)]
$\|\mathsf{\Pi}_{\mathcal{X}}[x]-y\|^2 \leq \|x-y\|^2 - \|\mathsf{\Pi}_{\mathcal{X}}[x]-x\|^2$ for all
$x \in \mathbb{R}^d$ and for all  $y \in \mathcal{X}.$
\end{enumerate}
\end{lemma}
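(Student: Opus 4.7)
The plan is to derive everything from the standard variational-inequality characterization of the Euclidean projection onto a nonempty closed convex set. Specifically, I would first show that for every $x \in \mathbb{R}^d$ the projection $p := \mathsf{\Pi}_{\mathcal{X}}[x]$ exists, is unique, and satisfies the variational inequality
\[
\langle x - p,\, y - p \rangle \leq 0 \qquad \text{for all } y \in \mathcal{X}.
\]
Existence and uniqueness follow because $v \mapsto \|v-x\|^2$ is continuous, coercive, and strongly convex, while $\mathcal{X}$ is closed and convex. The variational inequality is then the first-order optimality condition at the minimizer: for any $y \in \mathcal{X}$ and $t \in [0,1]$ the point $(1-t)p + ty$ lies in $\mathcal{X}$, so the one-sided derivative of $t \mapsto \|(1-t)p + ty - x\|^2$ at $t = 0^+$ must be nonnegative, and this rearranges to the displayed inequality.

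For part~(a), let $p = \mathsf{\Pi}_{\mathcal{X}}[x]$ and $q = \mathsf{\Pi}_{\mathcal{X}}[y]$. Applying the variational inequality to $p$ with test point $q \in \mathcal{X}$, and symmetrically to $q$ with test point $p \in \mathcal{X}$, gives
\[
\langle x - p,\, q - p \rangle \leq 0 \quad \text{and} \quad \langle y - q,\, p - q \rangle \leq 0.
\]
Summing these two inequalities and regrouping yields $\|p - q\|^2 \leq \langle x - y,\, p - q \rangle$, and the Cauchy--Schwarz inequality then produces $\|p - q\| \leq \|x - y\|$, which is the nonexpansiveness claim; continuity of $\mathsf{\Pi}_{\mathcal{X}}$ is an immediate consequence.

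For part~(b), fix $y \in \mathcal{X}$ and expand
\[
\|x - y\|^2 = \|(x - p) + (p - y)\|^2 = \|x - p\|^2 + 2\langle x - p,\, p - y\rangle + \|p - y\|^2,
\]
so the desired bound $\|p - y\|^2 \leq \|x - y\|^2 - \|p - x\|^2$ is equivalent to $\langle x - p,\, p - y\rangle \geq 0$, i.e., to $\langle x - p,\, y - p \rangle \leq 0$. This is precisely the variational inequality with test point $y$, which closes the argument.

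The only step that requires any real care is establishing the variational-inequality characterization of $\mathsf{\Pi}_{\mathcal{X}}$; once that is in hand, both parts reduce to a few lines of algebra and I do not foresee any serious obstacle. An alternative route for~(b) that avoids even invoking the variational inequality is to use strong convexity of $v \mapsto \|v - x\|^2$ directly at the minimizer $p$ over $\mathcal{X}$, which produces the same quadratic bound in a single step.
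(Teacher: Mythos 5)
Your proof is correct: the variational-inequality characterization $\langle x-\mathsf{\Pi}_{\mathcal{X}}[x],\,y-\mathsf{\Pi}_{\mathcal{X}}[x]\rangle\le 0$ is established properly, and both parts (a) and (b) follow from it by the algebra you describe. The paper does not include its own proof of this lemma---it cites the textbook reference \cite{BNO}---and your argument is precisely the standard one given there, so there is nothing further to compare.
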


\noindent\textit{Matrix convergence.}
Recall we defined $W(k)$ to be the matrix with $(i,j)$th entry equal to $w_{ij}(k)$.
From Assumption~\ref{assume:ds}, the matrix $W(k)$ is doubly stochastic.
Define for all $k, s$ with $k > s \ge 0$,
\begin{equation}\label{eqn:Phi}
\Phi(k,s) = W(k)W(k-1)\cdots W(s+1)W(s),
\end{equation}
with $\Phi(k,k)=W(k)$ for all $k\ge0$.
We state the convergence property of the matrix $\Phi(k,s)$ (see \cite{ANquan2008} for its proof).
Let $[\Phi(k,s)]_{ij}$ denote the $(i,j)$th entry of the matrix $\Phi(k,s)$, and $e \in \mathbb{R}^m$ be the column vector whose all entries are equal to 1.
\begin{lemma}\label{lem:Phi}
Let Assumptions \ref{assume:nc} and \ref{assume:ds} hold. Then,
\begin{enumerate}
\item[(a)] $\lim_{k \rightarrow \infty} \Phi(k,s) = \frac{1}{m}ee^T$ for all $s \ge 0$.
\item[(b)] $\left|[\Phi(k,s)]_{ij}- \frac{1}{m}\right| \le \theta \beta^{k-s}$ for all $k \ge s \ge 0$, where $\theta = \left(1-\frac{\eta}{4m^2}\right)^{-2}$ and $\beta = \left(1-\frac{\eta}{4m^2}\right)^{\frac{1}{Q}}$.
\end{enumerate}
\end{lemma}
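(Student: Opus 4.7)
The plan is to prove part (b) first, since it immediately implies part (a) once we observe that $\beta \in (0,1)$ forces $\theta \beta^{k-s} \to 0$ as $k \to \infty$. To establish (b), I would exploit the standard machinery for products of doubly stochastic matrices over a time-varying but sufficiently connected network.

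The first step is to note that, by Assumption~\ref{assume:ds}, each $W(k)$ is doubly stochastic, so each product $\Phi(k,s)$ is also doubly stochastic, in particular nonnegative with row and column sums equal to 1. This alone shows that $\frac{1}{m} ee^T$ is a fixed point of right/left multiplication by any $\Phi(k,s)$, which explains the form of the limit. The second step is a positivity lemma: using Assumption~\ref{assume:nc} together with the lower bound $\eta$ on positive weights in Assumption~\ref{assume:ds}(c), one argues that within any window of $Q$ consecutive time steps, information from any agent $j$ reaches any agent $i$ along some directed path in the aggregated graph, and each edge on such a path contributes a multiplicative factor of at least $\eta$ to the corresponding entry of the product. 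Combined with the self-loops $i \in N_i(k)$, this guarantees that in a window of length $mQ$ every entry of the product is bounded below by a positive constant of the form $\eta^{O(m)}$; the specific constant $\eta/(4m^2)$ in $\theta, \beta$ comes from a more refined accounting that uses double stochasticity to avoid an exponential blow-up in $m$.

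The third, and main, step is the contraction argument. For each column $j$, define the dispersion $\Delta_j(k,s) = \max_i [\Phi(k,s)]_{ij} - \min_i [\Phi(k,s)]_{ij}$. I would show that after every block of $Q$ consecutive multiplications, $\Delta_j$ contracts by a factor of at most $\left(1 - \frac{\eta}{4m^2}\right)$. This is the standard Dobrushin / ergodic coefficient inequality applied to doubly stochastic matrices on a $Q$-connected graph; the factor $\frac{1}{4m^2}$ arises because the worst-case uniform lower bound on a single $Q$-block entry, refined via the double-stochasticity constraint, is $\frac{\eta}{2m^2}$ (or similar), and a factor of $2$ is lost going from minimum-entry bounds to contraction rates. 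Iterating this contraction over $\lfloor (k-s)/Q \rfloor$ full blocks yields a bound of the form $\Delta_j(k,s) \leq \left(1-\frac{\eta}{4m^2}\right)^{\lfloor (k-s)/Q \rfloor}$, which in turn, since each row sum is 1 and hence $\min_i [\Phi(k,s)]_{ij} \leq \frac{1}{m} \leq \max_i [\Phi(k,s)]_{ij}$, controls $\left|[\Phi(k,s)]_{ij} - \frac{1}{m}\right|$. Absorbing the rounding in the exponent and the constant up to two partial blocks into the prefactor $\theta = \left(1-\frac{\eta}{4m^2}\right)^{-2}$ gives exactly the claimed bound with $\beta = \left(1-\frac{\eta}{4m^2}\right)^{1/Q}$.

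The main technical obstacle is not the overall strategy, which is standard, but producing the sharp constant $\frac{\eta}{4m^2}$: a naive path-counting bound yields a contraction rate that degrades exponentially in $m$, and obtaining the polynomial dependence requires leveraging the column-stochasticity of $W(k)$ (not only row-stochasticity) to show that the "mass" accumulating at a sink node cannot vanish and must be redistributed. Since this argument is carried out in detail in the cited reference~\cite{ANquan2008}, I would invoke that analysis for the precise constants rather than reproducing the bookkeeping here.
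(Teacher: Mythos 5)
The paper itself does not prove this lemma: it is stated as a known result with a pointer to~\cite{ANquan2008}, so there is no in-paper argument to compare against line by line. Your outline is consistent with the standard proof in that reference in its overall architecture (double stochasticity of products of doubly stochastic matrices, a per-block contraction over windows of length $Q$ guaranteed by Assumptions~\ref{assume:nc} and~\ref{assume:ds}, and absorption of at most two partial blocks into the prefactor $\theta=(1-\eta/(4m^2))^{-2}$, which is exactly how the exponent $\lceil (k-s)/Q\rceil-2$ turns into $\theta\beta^{k-s}$). Since you, like the paper, ultimately invoke~\cite{ANquan2008} for the constants, the proposal is acceptable.

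One substantive caution: the crux of the lemma is precisely the per-$Q$-block contraction factor $1-\eta/(4m^2)$, and the mechanism you sketch for it would not deliver it. Your step two (entry-wise lower bounds of order $\eta^{O(m)}$ over windows of length $mQ$, fed into a Dobrushin-type dispersion bound) yields a contraction constant that is exponentially small in $m$, not polynomial; your appeal to "a factor of $2$ lost going from minimum-entry bounds to contraction rates" does not bridge that gap. The argument in~\cite{ANquan2008} that actually produces $\eta/(2m^2)$ per connectivity block is a different one: it tracks the decrease of the sample variance $\sum_i (x_i-\bar x)^2$ under multiplication by a doubly stochastic matrix, and shows that strong connectivity of the union graph forces at least one edge of weight at least $\eta$ to cross between above-average and below-average components, giving a variance drop proportional to $\eta/m^2$; the entrywise bound on $\Phi(k,s)$ is then extracted from the variance bound. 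You correctly identify that column-stochasticity is what prevents the exponential blow-up, but the proof you would need to write is the variance-contraction argument, not a refined path count. As long as the intent is to cite~\cite{ANquan2008} for that step, as the paper does, the proposal stands.
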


\noindent\textit{Supermartingale convergence result.}
In our analysis of the DRP algorithm, we also make use of the following supermartingale convergence result due to Robbins and Siegmund (see~\cite[Lemma 10-11, p. 49-50]{polyak}).

\begin{theorem}\label{thm:super}
Let $\{v_k\}$, $\{u_k\}$, $\{a_k\}$ and $\{b_k\}$ be sequences of non-negative random variables such that
\[
\mathsf{E}[v_{k+1}|\Fc_k] \leq (1+a_k)v_k - u_k + b_k
\quad\text{ for all } k \geq 0 \quad a.s.,
\]
where $\Fc_k$ denotes the collection $v_0,\ldots,v_k$, $u_0,\ldots,u_k$, $a_0,\ldots,a_k$ and $b_0,\ldots,b_k$.
Also, let $\sum_{k=0}^{\infty} a_k < \infty$ and $\sum_{k=0}^{\infty} b_k < \infty$ a.s. Then, we have $\lim_{k \rightarrow \infty} v_k = v$ for a random variable $v \geq 0$ \textit{a.s.}, and $\sum_{k=0}^{\infty} u_k < \infty$ \textit{a.s}.
\end{theorem}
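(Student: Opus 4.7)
The plan is to reduce the statement to Doob's classical convergence theorem for non-negative supermartingales, by constructing from $\{v_k\}$ an auxiliary process that absorbs the multiplicative perturbation $(1+a_k)$ and the additive perturbation $b_k$.

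First, I would remove the multiplicative factor by normalization. Define $U_k = \prod_{i=0}^{k-1}(1+a_i)$ with $U_0 = 1$. Since $a_k \ge 0$ and $\sum_k a_k < \infty$ a.s., the inequality $\log(1+a_i) \le a_i$ gives $\log U_k \le \sum_i a_i < \infty$, so $U_k$ increases a.s.\ to a finite, strictly positive limit $U_\infty$. Dividing the hypothesis by $U_{k+1} = (1+a_k)U_k$, which is $\Fc_k$-measurable, yields
\[
\mathsf{E}\bigl[v_{k+1}/U_{k+1} \bigm| \Fc_k\bigr] \;\le\; v_k/U_k \;-\; u_k/U_{k+1} \;+\; b_k/U_{k+1}.
\]
Writing $\tilde v_k = v_k/U_k$, $\tilde u_k = u_k/U_{k+1}$, $\tilde b_k = b_k/U_{k+1}$, the recursion becomes $\mathsf{E}[\tilde v_{k+1} \mid \Fc_k] \le \tilde v_k - \tilde u_k + \tilde b_k$ with no multiplicative term.

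Next, I would form the auxiliary process $Z_k = \tilde v_k + \sum_{i=0}^{k-1}(\tilde u_i - \tilde b_i)$. The recursion above states precisely that $\mathsf{E}[Z_{k+1} \mid \Fc_k] \le Z_k$, so $\{Z_k\}$ is a supermartingale. It is bounded below by $-\sum_{i=0}^\infty \tilde b_i$, which, since $U_{i+1} \ge 1$, is at least $-\sum_i b_i$ and hence finite a.s. To apply Doob's theorem despite this lower bound being only almost-sure rather than deterministic, I would introduce the $\Fc_k$-stopping times $\tau_N = \inf\{k : \sum_{i=0}^{k} \tilde b_i > N\}$, so that on the stopped interval $\sum_{i<k\wedge\tau_N}\tilde b_i \le N$. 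Then $Z_{k\wedge\tau_N}+N$ is a genuine non-negative supermartingale and converges a.s.\ by Doob's theorem. Since $\sum_i \tilde b_i < \infty$ a.s., we have $\tau_N \to \infty$ a.s.\ as $N \to \infty$, and consequently $Z_k$ itself converges a.s.\ to a finite limit.

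Finally, I would extract both conclusions from the convergence of $Z_k$. The sequence $\tilde v_k + \sum_{i<k}\tilde u_i = Z_k + \sum_{i<k}\tilde b_i$ converges a.s.\ as a sum of two a.s.\ convergent sequences; because $\sum_{i<k}\tilde u_i$ is non-decreasing and $\tilde v_k \ge 0$, this forces both summands to converge individually. This gives $\sum_k \tilde u_k < \infty$ a.s., which on multiplication by the a.s.\ bounded sequence $U_{k+1}$ delivers $\sum_k u_k < \infty$ a.s., and also $v_k = U_k \tilde v_k \to U_\infty \lim_k \tilde v_k$ a.s., as required. The main obstacle is a measurability issue: the natural translation constant $\sum_{i=0}^\infty \tilde b_i$ needed to turn $Z_k$ into a non-negative process involves future randomness and is not $\Fc_k$-measurable, so one cannot simply shift and apply Doob's theorem directly; the stopping-time localization is the essential device that sidesteps this.
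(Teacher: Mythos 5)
The paper does not actually prove this statement: it is quoted as a known result of Robbins and Siegmund, with a pointer to Lemmas 10--11 on pp.~49--50 of Polyak's book, and is then used as a black box in the convergence analysis. Your argument is, in essence, the standard proof of that lemma, and it is correct: normalizing by the $\Fc_k$-measurable products $U_{k+1}=\prod_{i\le k}(1+a_i)$ (which converge to a finite limit because $\sum_k a_k<\infty$) removes the multiplicative term; the compensated process $Z_k=\tilde v_k+\sum_{i<k}(\tilde u_i-\tilde b_i)$ is a supermartingale; the localization by the stopping times $\tau_N$ correctly converts the merely almost-sure lower bound $-\sum_i\tilde b_i$ into the deterministic bound $-N$ so that Doob's theorem applies to each stopped process, and $\tau_N\uparrow\infty$ a.s.\ recovers convergence of $Z_k$ on a set of full measure; finally, the monotonicity of $\sum_{i<k}\tilde u_i$ together with $\tilde v_k\ge 0$ legitimately splits the convergent sum $\tilde v_k+\sum_{i<k}\tilde u_i$ into two individually convergent pieces, yielding both conclusions after multiplying back by the a.s.\ bounded $U_k$. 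The one point you leave implicit is integrability: to treat $Z_{k\wedge\tau_N}+N$ as a supermartingale in the classical sense and invoke Doob, you need $\mathsf{E}[v_0]<\infty$ (or an additional truncation of $v_0$, or conditional expectations interpreted in the extended sense for non-negative variables), a hypothesis the statement omits but which every standard treatment quietly assumes. Since the paper supplies only the citation, there is nothing further to compare against; your write-up would serve as a self-contained substitute for the reference.
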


The above theorem is the key in our convergence analysis. Specifically,
once we show that Theorem~\ref{thm:super} applies to $v_{k+1}=
\sum_{i=1}^m\|x_i(k+1)-x^*\|^2$ for an optimal solution $x^*$,
the rest of the proof just builds on the implications
of the theorem.

\noindent\textit{Scalar Sequences.}
We also use the convergence result for scalar sequences (see Lemma~3.1 in~\cite{Ram2010} for its proof).
For a scalar $\beta$ and a scalar sequence $\{\gamma(k)\}$, we consider the convolution sequence $\sum_{\ell=0}^k \beta^{k-\ell}\gamma(\ell)$.
\begin{lemma}\label{lem:scalar}
If $\lim_{k \rightarrow \infty} \gamma(k) = \gamma$ and $0<\beta<1$, then
$
\lim_{k \rightarrow \infty} \sum_{\ell=0}^k\beta^{k-\ell}\gamma(\ell) = \frac{\gamma}{1-\beta}.
$
\end{lemma}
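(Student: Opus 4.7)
The plan is to use the standard trick of splitting $\gamma(\ell)$ into its limit plus a vanishing remainder, and then control the convolution of the remainder with the geometric tail by a $\varepsilon$-$N$ argument. Write $\gamma(\ell) = \gamma + \varepsilon(\ell)$, where $\varepsilon(\ell) \to 0$ by hypothesis. Then
\[
\sum_{\ell=0}^k \beta^{k-\ell}\gamma(\ell) = \gamma \sum_{\ell=0}^k \beta^{k-\ell} + \sum_{\ell=0}^k \beta^{k-\ell}\varepsilon(\ell),
\]
and the first term is a plain geometric sum, equal to $\gamma(1-\beta^{k+1})/(1-\beta)$, which converges to $\gamma/(1-\beta)$ since $0 < \beta < 1$. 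Hence it only remains to show that the second, ``error'' term vanishes.

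For the error term, first note that $\{\varepsilon(\ell)\}$ is bounded; let $M = \sup_\ell |\varepsilon(\ell)|$. Fix an arbitrary $\delta > 0$ and pick $N$ so large that $|\varepsilon(\ell)| < \delta$ for all $\ell > N$. For $k > N$, split the convolution at $\ell = N$:
\[
\left|\sum_{\ell=0}^k \beta^{k-\ell}\varepsilon(\ell)\right|
\le \sum_{\ell=0}^N \beta^{k-\ell}|\varepsilon(\ell)| + \sum_{\ell=N+1}^k \beta^{k-\ell}|\varepsilon(\ell)|
\le M(N+1)\beta^{k-N} + \frac{\delta}{1-\beta}.
\]
In the last bound, the first piece uses the crude estimate $\beta^{k-\ell} \le \beta^{k-N}$ for $\ell \le N$, and the second piece uses $\sum_{\ell=N+1}^k \beta^{k-\ell} \le \sum_{j=0}^\infty \beta^j = 1/(1-\beta)$.

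Holding $N$ fixed and letting $k \to \infty$, the first piece tends to $0$ since $\beta < 1$, so
\[
\limsup_{k \to \infty}\left|\sum_{\ell=0}^k \beta^{k-\ell}\varepsilon(\ell)\right| \le \frac{\delta}{1-\beta}.
\]
Since $\delta > 0$ was arbitrary, the limsup is $0$, hence the error term tends to $0$, and combining with the geometric-sum computation gives the claimed limit $\gamma/(1-\beta)$. There is no real obstacle here: the only point requiring mild care is to split the sum before taking limits, because neither piece individually comes from a uniform bound on $\varepsilon(\ell)$ over all $\ell$, but separating the ``head'' (finitely many terms, killed by $\beta^{k-N} \to 0$) from the ``tail'' (uniformly small but summed over $k - N$ terms, controlled by the geometric series) handles exactly this.
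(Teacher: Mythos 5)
Your proof is correct, and it is the standard head/tail splitting argument; the paper itself does not reprove this lemma but defers to Lemma~3.1 of the cited reference, whose proof proceeds in essentially the same way (decompose $\gamma(\ell)$ as limit plus vanishing remainder, sum the geometric part exactly, and kill the remainder's convolution by splitting at a threshold $N$). No gaps.
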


\section{Basic Relations \label{sec:lem}}
Our convergence analysis is based on a critical relation
that captures the decrease in values  $\sum_{i=1}^m\|x_i(k+1)-x^*\|^2$ as the algorithm progresses.
Such a relation is provided in Lemma~\ref{lem:first}, which is taken from~\cite{AN2011}
where it was developed for a centralized algorithm.
This basic relation  is further refined to take into account the distributed nature of the algorithm.
Specifically, in Lemma~\ref{lemma:key},
we show that the weighted averages $v_i(k)$ of the iterates approach the constraint set $X$ asymptotically.
Then, in Lemma~\ref{lem:disagree}, we prove that
the agents' disagreement on $v_i(k)$ is diminishing with the number $k$ of iterations.
The proof of Lemma~\ref{lem:disagree} relies on an
auxiliary result taken from~\cite{Ram2010},
which is provided in Lemma~\ref{lemma:ram}.

In the analysis, we will rely on the expectation taken with respect to the past history of the algorithm, which we define as follows.
Let $\mathcal{F}_k$ be the $\sigma$-algebra generated by the entire history of the algorithm up to time $k-1$
inclusively (realizations of all the random variables but not the realizations of the indices $\Omega_i$
at time $k$), i.e., for all $k \ge 1$,
\[
\Fc_k = \{x_i(0), i \in V\} \cup \{\Omega_i(\ell); 0\le \ell \le k-1, i \in V\},
\]
where $\Fc_0 = \{x_i(0), i \in V\}$.
Therefore, given $\Fc_k$, the collection $x_i(0),\ldots,x_i(k)$ and $v_i(0),\ldots,v_i(k)$
generated by the algorithm (\ref{eqn:algo3})-(\ref{eqn:algo4}) is fully determined.

\subsection{Basic Iterate Relation}\label{sec:basic}
The following lemma is from the paper \cite[Lemma 1]{AN2011}, which provides relation among the iterate obtained after one step of the algorithm (\ref{eqn:algo3}), a point in the feasible set $\Xc$ and an arbitrary point in $\mathbb{R}^d$.

\begin{lemma}\label{lem:first}
Let $\mathcal{Y} \subseteq \mathbb{R}^d$ be a closed convex set.
Let function $\phi:\mathbb{R}^d\to\mathbb{R}$ be convex and
differentiable over  $\mathbb{R}^d$ with Lipschitz continuous gradients with a constant $L$.
Let $y$ be given by
\[
y = \mathsf{\Pi}_{\mathcal{Y}}[x-\alpha\nabla \phi(x)] \quad
\text{for some } x \in \mathbb{R}^d \text{ and } \alpha > 0.
\]
Then, we have for any $\check{x} \in \mathcal{Y}$ and $z \in \mathbb{R}^d$,
\begin{align}\label{eqn:basic}
\| y-\check{x}\|^2 \leq {}& (1+A_{\tau}\alpha^2)\|x-\check{x}\|^2
  - 2\alpha(\phi(z)-\phi(\check{x})) \nonumber\\
{}& - \frac{3}{4}\|y-x\|^2 + \left(\frac{3}{8\tau}+2\alpha L\right)\|x-z\|^2 \nonumber\\
{}& + B_{\tau}\alpha^2\|\nabla \phi(\check{x})\|^2,
\end{align}
where $A_{\tau} = 8L^2 + 16\tau L^2$, $B_{\tau} = 8\tau+8$ and $\tau >0 $ is arbitrary.
\end{lemma}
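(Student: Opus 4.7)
The natural starting point is the non-expansive projection property stated in Lemma~\ref{lem:proj}(b). Applying it to $y = \mathsf{\Pi}_\mathcal{Y}[x - \alpha\nabla\phi(x)]$ and $\check{x} \in \mathcal{Y}$ yields
\[
\|y - \check{x}\|^2 \leq \|x - \alpha\nabla\phi(x) - \check{x}\|^2 - \|y - x + \alpha\nabla\phi(x)\|^2.
\]
Expanding the two squares, the $\alpha^2\|\nabla\phi(x)\|^2$ pieces cancel and the cross terms coalesce, so that
\[
\|y - \check{x}\|^2 \leq \|x - \check{x}\|^2 - \|y - x\|^2 - 2\alpha\langle \nabla\phi(x), y - \check{x}\rangle.
\]
All remaining work is to massage the inner-product term on the right into the right-hand side of~(\ref{eqn:basic}).

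The strategy is to split this inner product by inserting both the auxiliary point $z$ and the gradient $\nabla\phi(z)$, using $y - \check{x} = (y-x) + (x-z) + (z-\check{x})$ together with $\nabla\phi(x) = \nabla\phi(z) + (\nabla\phi(x) - \nabla\phi(z))$. Among the pieces that arise, the one with the correct gradient at the correct point, $-2\alpha\langle\nabla\phi(z), z-\check{x}\rangle$, is upper bounded by $-2\alpha(\phi(z) - \phi(\check{x}))$ using convexity of $\phi$ (the subgradient inequality at $z$), and the monotone-difference piece $-2\alpha\langle\nabla\phi(x)-\nabla\phi(z), z-x\rangle$ is bounded by $2\alpha L\|x-z\|^2$ via Cauchy--Schwarz combined with the Lipschitz bound $\|\nabla\phi(x)-\nabla\phi(z)\|\le L\|x-z\|$. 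These two moves already account for the $-2\alpha(\phi(z)-\phi(\check{x}))$ functional term and for the $2\alpha L\|x-z\|^2$ component of the coefficient on $\|x-z\|^2$ in~(\ref{eqn:basic}).

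The leftover cross terms --- those of the form $-2\alpha\langle\nabla\phi(x), y - x\rangle$, $-2\alpha\langle\nabla\phi(z), x - z\rangle$, and $-2\alpha\langle\nabla\phi(x)-\nabla\phi(z), x-\check{x}\rangle$ --- are dispatched by Young's inequality $2ab \le a^2/c + c\,b^2$ with conjugate parameters chosen so that, first, the $\|y-x\|^2$ residues sum to exactly $\tfrac{1}{4}\|y-x\|^2$ (which combined with the pre-existing $-\|y-x\|^2$ produces the $-\tfrac{3}{4}\|y-x\|^2$), and, second, the $\|x-z\|^2$ residues accumulate to $\tfrac{3}{8\tau}\|x-z\|^2$, so that together with the $2\alpha L\|x-z\|^2$ above the overall coefficient becomes $\tfrac{3}{8\tau}+2\alpha L$. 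Each residual $\|\nabla\phi(\cdot)\|^2$ term is then converted by the Lipschitz-gradient bound $\|\nabla\phi(u)\|^2 \leq 2L^2\|u-\check{x}\|^2 + 2\|\nabla\phi(\check{x})\|^2$ (a consequence of Lipschitz continuity and $\|a+b\|^2 \le 2\|a\|^2 + 2\|b\|^2$), which deposits the $A_\tau\alpha^2\|x-\check{x}\|^2$ contribution that completes the factor $1 + A_\tau\alpha^2$ on $\|x-\check{x}\|^2$ and the $B_\tau\alpha^2\|\nabla\phi(\check{x})\|^2$ term.

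The main obstacle is the bookkeeping: coordinating the Young-inequality parameters across the independent splits so that the final constants land exactly at $-\tfrac{3}{4}$, $\tfrac{3}{8\tau}+2\alpha L$, $A_\tau = 8L^2+16\tau L^2$, and $B_\tau = 8\tau + 8$. No individual manipulation is hard, but choosing compatible conjugate constants --- in particular ones that make the common factor $\tau$ appear correctly and simultaneously in both $A_\tau$ and $B_\tau$ through the gradient-norm conversion step --- requires care, and the split of $\|x-z\|^2$ contributions between a $\tau$-controlled Young piece and the Lipschitz-monotonicity piece is the delicate part of the argument.
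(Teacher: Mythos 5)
The paper never proves Lemma~\ref{lem:first} --- it imports it verbatim from \cite[Lemma 1]{AN2011} --- and your outline is precisely the standard argument behind that cited result: start from Lemma~\ref{lem:proj}(b), cancel the $\alpha^2\|\nabla\phi(x)\|^2$ terms to reach $\|y-\check{x}\|^2\le\|x-\check{x}\|^2-\|y-x\|^2-2\alpha\langle\nabla\phi(x),y-\check{x}\rangle$, then route the inner product through $z$ and $\nabla\phi(z)$, using convexity at $z$, the Lipschitz property, and Young's inequality. Since you flag the constant-chasing as the only risk, here is one choice of parameters under which everything lands exactly: bound $2\alpha\|\nabla\phi(x)\|\|y-x\|\le\tfrac14\|y-x\|^2+4\alpha^2\|\nabla\phi(x)\|^2$ (giving the $-\tfrac34$), bound $2\alpha L\|x-z\|\|x-\check{x}\|\le\tfrac{1}{8\tau}\|x-z\|^2+8\tau\alpha^2L^2\|x-\check{x}\|^2$, and, after $\|\nabla\phi(z)\|\le L\|x-z\|+\|\nabla\phi(x)\|$, bound $2\alpha\|\nabla\phi(x)\|\|x-z\|\le\tfrac{1}{4\tau}\|x-z\|^2+4\tau\alpha^2\|\nabla\phi(x)\|^2$; then $\tfrac{1}{8\tau}+\tfrac{1}{4\tau}=\tfrac{3}{8\tau}$, the accumulated $(4+4\tau)\alpha^2\|\nabla\phi(x)\|^2$ converts via $\|\nabla\phi(x)\|^2\le 2L^2\|x-\check{x}\|^2+2\|\nabla\phi(\check{x})\|^2$ into $(8+8\tau)L^2\alpha^2\|x-\check{x}\|^2+(8+8\tau)\alpha^2\|\nabla\phi(\check{x})\|^2$, and $8\tau L^2+(8+8\tau)L^2=8L^2+16\tau L^2$ recovers $A_\tau$ and $B_\tau=8\tau+8$ exactly.
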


Lemma~\ref{lem:first} provides a measure of progress toward an optimal point of the function $\phi$
when moving from a point $x$ in the direction opposite of the gradient $\nabla \phi(x)$.
Specifically, if $x^*$ is a minimizer of $\phi(x)$ over $\mathcal{Y}$,
the lemma (with $\check{x}=x^*$) will provide us with a relation between the distances
$\|y-x^*\|$ and $\|x-x^*\|$, where the point $y$ is
resulting from a projected-gradient step away from the point $x$.
The lemma provides a relation that helps us measure the progress
of a gradient-based algorithm for minimizing $\phi$.
Lemma~\ref{lem:first}, with a specific identification of the terms,
will be a starting point for our convergence proof.

\subsection{Projection Estimate \label{sec:proj}}
In the next lemma, we show that
the sequences $\{v_i(k)\}$, $i\in V$, approach the constraint set $\Xc$.
The result does not say that these sequences necessarily have accumulation points in $\Xc$,
but rather that the distance between $v_i(k)$ and the set $\Xc$ tends to 0, as $k\to\infty$, for all $i$.
Furthermore, these distances converge to 0 rather fast, as the sum of all squared distances over time is finite,
which is a critical relation in our analysis.

\begin{lemma}\label{lemma:key}
Let Assumption 1 hold.
Let each $W(k)$ be doubly stochastic, and let $\sum_{k=0}^\infty \a_k^2<\infty$. Then,
\[\sum_{k=0}^\infty \dist^2(v_i(k),\Xc)<\infty\qquad\hbox{for all $i\in V$ } a.s.\]
\end{lemma}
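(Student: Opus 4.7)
The plan is to apply Lemma~\ref{lem:first} to each agent separately with a carefully chosen pair of reference points, aggregate the resulting per-agent bounds across the network, and then invoke the supermartingale convergence theorem (Theorem~\ref{thm:super}) on the total squared distance $D(k)\triangleq\sum_{i=1}^m\mathrm{dist}^2(x_i(k),\mathcal{X})$.

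The centerpiece is the following choice in Lemma~\ref{lem:first}: take $\mathcal{Y}=\mathcal{X}_i^{\Omega_i(k)}$, $\phi=f_i$, $\alpha=\alpha_k$, $x=v_i(k)$, $y=x_i(k+1)$, and, crucially, $\check{x}=z=\mathsf{\Pi}_{\mathcal{X}}[v_i(k)]$, which is admissible since $\mathcal{X}\subseteq\mathcal{X}_i^{\Omega_i(k)}$. This choice forces $f_i(z)-f_i(\check{x})=0$, collapses $\|v_i(k)-\check{x}\|^2=\|v_i(k)-z\|^2=\mathrm{dist}^2(v_i(k),\mathcal{X})$, and gives $\|\nabla f_i(\check{x})\|\le G_f$ by Assumption~\ref{assume:f}(d). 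Bounding the left-hand side of Lemma~\ref{lem:first} below by $\mathrm{dist}^2(x_i(k+1),\mathcal{X})$ (since $\check{x}\in\mathcal{X}$) and the term $\|y-x\|^2$ below by $\mathrm{dist}^2(v_i(k),\mathcal{X}_i^{\Omega_i(k)})$ (since $x_i(k+1)\in\mathcal{X}_i^{\Omega_i(k)}$) yields a per-agent inequality whose only random-set-dependent piece is $-\tfrac{3}{4}\mathrm{dist}^2(v_i(k),\mathcal{X}_i^{\Omega_i(k)})$.

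Taking conditional expectation given $\mathcal{F}_k$ (under which $v_i(k)$ is determined), Assumption~\ref{assume:c} converts this piece into $-\tfrac{3}{4c}\mathrm{dist}^2(v_i(k),\mathcal{X})$. Setting $\tau=c$ in Lemma~\ref{lem:first} merges the coefficients of $\mathrm{dist}^2(v_i(k),\mathcal{X})$ into $1-\tfrac{3}{8c}+A_c\alpha_k^2+2\alpha_k L$; since $\sum_k\alpha_k^2<\infty$ forces $\alpha_k\to 0$, this coefficient drops below $1-\tfrac{3}{16c}$ for all $k\ge K$ with some finite $K$. Summing the resulting bounds over $i\in V$ and exploiting the convexity of $\mathrm{dist}^2(\cdot,\mathcal{X})$ together with the doubly stochasticity of $W(k)$ to get $\sum_i\mathrm{dist}^2(v_i(k),\mathcal{X})\le \sum_j\mathrm{dist}^2(x_j(k),\mathcal{X})=D(k)$ then produces, for $k\ge K$,
$$\mathsf{E}[D(k+1)\mid\mathcal{F}_k]\le D(k)-\tfrac{3}{16c}V(k)+mB_cG_f^2\alpha_k^2,\qquad V(k):=\sum_{i=1}^m\mathrm{dist}^2(v_i(k),\mathcal{X}).$$

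This matches the hypothesis of Theorem~\ref{thm:super} with $a_k=0$, $u_k=\tfrac{3}{16c}V(k)$, and $b_k=mB_cG_f^2\alpha_k^2$ (summable by hypothesis), delivering $\sum_kV(k)<\infty$ almost surely, whence $\sum_k\mathrm{dist}^2(v_i(k),\mathcal{X})<\infty$ a.s.\ for every $i$. The main obstacle is the opening move: in the centralized analogue one would take $\check{x}=x^*\in\mathcal{X}^*$ so that $f(z)-f(x^*)\ge 0$ and the function-value term drops with a favorable sign, but in the distributed setting $x^*$ need not minimize any individual $f_i$, and that sign control is lost. The device of setting $\check{x}=z=\mathsf{\Pi}_\mathcal{X}[v_i(k)]$ circumvents this by zeroing the function-value term outright; the price is that the left-hand side only controls $\mathrm{dist}^2(x_i(k+1),\mathcal{X})$ rather than a distance to an optimizer, which is precisely the weaker information that this lemma needs.
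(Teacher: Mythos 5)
Your proposal is correct and follows essentially the same route as the paper's own proof: the same identification in Lemma~\ref{lem:first} with the key device $\check{x}=z=\mathsf{\Pi}_{\mathcal{X}}[v_i(k)]$, the same lower bounds on $\dist(x_i(k+1),\Xc)$ and on $\|x_i(k+1)-v_i(k)\|$, the same use of Assumption~\ref{assume:c} under conditional expectation, the same aggregation via convexity of $\dist^2(\cdot,\Xc)$ and double stochasticity of $W(k)$, and the same appeal to Theorem~\ref{thm:super}. The only cosmetic difference is that you absorb the $A\alpha_k^2$ factor into the contraction coefficient for large $k$ (taking $a_k=0$ in the supermartingale theorem), whereas the paper retains $a_k=A\alpha_k^2$; both are valid.
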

\begin{proof}
In Lemma~\ref{lem:first}, let $y=x_i(k+1)$, $x=v_i(k)$, $\mathcal{Y}=\mathcal{X}_i^{\Omega_i(k)}$,
$\alpha=\alpha_k$, $\phi=f_i$
and $\tau=c$ where $c$ is the constant from Assumption 3. Then, for any $\check{x} \in \Xc$ (also in
$\Xc_i^{\Omega_i(k)}$, since $\Xc\subseteq \Xc_i^{\Omega_i(k)}$)
and any $z\in\mathbb{R}^d$, we obtain
\allowdisplaybreaks{
\begin{align*}
\|x_i{}&(k+1)  -\check{x}\|^2
\leq  (1+A\alpha_k^2)\|v_i(k) - \check{x}\|^2\\
{}& - 2\a_k(f_i(z)-f_i(\check{x}))- \frac{3}{4}\|x_i(k+1)-v_i(k)\|^2\\
{}& + \left(\frac{3}{8c}+2\a_k L\right)\|v_i(k)-z\|^2 + B\a_k^2 G_f^2.
\end{align*}
}
where $A= 8L^2 + 16c L^2$ and $B = 8c+8$.
Here, we have also used
Assumption~\ref{assume:f}(d), according to which the gradients $\nabla f_i(x)$
are bounded on the set $\Xc$, i.e.,
$\|\nabla f_i(\mathsf{\Pi}_{\mathcal{X}}[v_i(k)])\|\le G_f$ for all $k$ and $i$.

Letting $\check{x}=z= \mathsf{\Pi}_{\mathcal{X}}[v_i(k)]$ in the preceding relation, we find
\begin{align}\label{eqn:e01}
\|x_i(k{}&+1)-\mathsf{\Pi}_{\mathcal{X}}[v_i(k)]\|^2
\leq  (1+A\alpha_k^2) \dist^2(v_i(k),\Xc) \nonumber\\
{}& - \frac{3}{4}\|x_i(k+1)-v_i(k)\|^2 \nonumber\\
{}& + \left(\frac{3}{8c}+2\a_k L\right)\dist^2(v_i(k),\Xc)
+ B\a_k^2 G_f^2.
\end{align}
By the definition of the projection, we have
\begin{align*}
\dist(x_i(k+1),\Xc)
={}& \|x_i(k+1)-\mathsf{\Pi}_{\mathcal{X}}[x_i(k+1)]\|\\
\le{}& \|x_i(k+1)-\mathsf{\Pi}_{\mathcal{X}}[v_i(k)]\|,
\end{align*}
\begin{align*}
\|x_i(k+1)-v_i(k)\|\ge{}& \left\|\mathsf{\Pi}_{\mathcal{X}_i^{\Omega_i(k) }}[v_i(k)]-v_i(k)\right\|\\
={}&\dist(v_i(k),\mathcal{X}_i^{\Omega_i(k)}).
\end{align*}
Upon substituting these estimates in~\eqref{eqn:e01}, we obtain
\begin{align}\label{eqn:e02}
\dist^2{}&(x_i(k+1),\Xc)
\leq  (1+A\alpha_k^2) \dist^2(v_i(k),\Xc) \nonumber\\
{}&- \frac{3}{4}\dist^2(v_i(k),\mathcal{X}_i^{\Omega_i(k)})\nonumber\\
{}& + \left(\frac{3}{8c}+2\a_k L\right)\dist^2(v_i(k),\Xc)
+ B\a_k^2 G_f^2. 
\end{align}
Taking the expectation in~\eqref{eqn:e02} conditioned on $\Fc_k$,
and using
\[\EXP{\dist^2(v_i(k),\mathcal{X}_i^{\Omega_i(k)})\mid \Fc_k}\ge
\frac{1}{c}\dist^2(v_i(k),\Xc),\]
which follows by Assumption 3,
we find that almost surely
\begin{align}\label{eqn:e03}
 \mathsf{E}\Big[{}&\dist^2(x_i(k+1),\Xc)\mid\Fc_k\Big] \leq (1\hspace{-1mm}+\hspace{-1mm}A\alpha_k^2) \dist^2(v_i(k),\Xc)\nonumber\\
{}& - \left(\frac{3}{8c}-2\a_k L\right)\dist^2(v_i(k),\Xc)
 + B\a_k^2 G_f^2. 
\end{align}

By using the definition of $v_i(k)$ (as a convex combination of $x_j(k)$ in~\eqref{eqn:algo3mix})
and the convexity
of the distance function $x\mapsto\dist^2(x,\Xc)$ (see \cite[p.~88]{BNO}),
we find that
\[\dist^2(v_i(k),\Xc)\le \sum_{j=1}^m [W(k)]_{ij}\,\dist^2(x_j(k),\Xc).\]
The preceding relation and~\eqref{eqn:e03} imply that almost surely for all $k\ge 0$,
\allowdisplaybreaks{
\begin{align*}
 \mathsf{E}\Big[{}&\dist^2(x_i(k+1),\Xc)\mid\Fc_k\Big]\nonumber\\
 \leq {}&(1+A\alpha_k^2) \sum_{j=1}^m [W(k)]_{ij}\,\dist^2(x_j(k),\Xc) \nonumber\\
{}& - \left(\frac{3}{8c}-2\a_k L\right)\dist^2(v_i(k),\Xc)
+ B\a_k^2G_f^2.
\end{align*}
}
Finally, by summing over all $i$ and using the fact that each $W(k)$ has column sums equal to 1,
we arrive at the following relation: almost surely for all $k\ge 0$,
\begin{align*}
\mathsf{E}\Big[ &\sum_{i=1}^m{}\dist^2(x_i(k+1),\Xc)\mid\Fc_k\Big]\nonumber\\
\leq {}& (1+A\alpha_k^2) \sum_{j=1}^m \dist^2(x_j(k),\Xc) \nonumber\\
{}& - \left(\frac{3}{8c}-2\a_k L\right)\sum_{i=1}^m\dist^2(v_i(k),\Xc)
+ mB\a_k^2G_f^2.
\end{align*}
Since $\sum_{k=0}^\infty \a_k^2<\infty$, it follows that $\a_k \rightarrow 0$,
implying that there exists $\bar{k}$ such that
$\frac{3}{8c}-2\a_k L >0 $ for all $k \ge \bar{k}$.
Therefore, for all $k \ge \bar{k}$, all the conditions of the supermartingale theorem are satisfied
(Theorem~\ref{thm:super}).
By applying the supermartingale theorem (to a time-delayed process from $\bar k$ onward) we conclude that
\[\sum_{k=0}^\infty \dist^2(v_i(k),\Xc)<\infty\qquad\hbox{for all $i \in V$ }  a.s.\]
\end{proof}

Lemma~\ref{lemma:key} shows that the points $v_i(k)$ are getting close to the set $\Xc$ relatively fast,
as $k\to\infty$. If the set $\Xc$ was compact, this would imply that all accumulation points
of $\{v_i(k)\}$ would lie in the set $\Xc$. However, there would be no guarantee that
the accumulation points of any two sequences $\{v_i(k)\}$ and $\{v_j(k)\}$ would be the same.
Even worse,
Lemma~\ref{lemma:key} would give no information about optimality of any of the accumulation points.
In the next section, we provide a result that helps us claim later on that
any two sequences $\{v_i(k)\}$ and $\{v_j(k)\}$ have the same accumulation points.

\subsection{Disagreement Estimate}
We now quantify the agent disagreements in time. We measure
the disagreements by using the norm $\|v_i(k)-\bar{v}(k)\|$ of the differences between
the estimates $v_i(k)$ generated by different agents according the algorithm
(\ref{eqn:algo3})-(\ref{eqn:algo4}) and their instantaneous average
$\bar{v}(k) = \frac{1}{m}\sum_{\ell=1}^m v_\ell(k)$. The proof of our result relies on
a lemma (adopted from \cite[Theorem 4.2]{Ram2012}), which states that
the iterates generated by a ``perturbed" consensus protocol are guaranteed to arrive at a consensus
when the perturbations are small in some sense. This lemma is provided next.
\begin{lemma}\label{lemma:ram}
Let Assumptions \ref{assume:nc} and \ref{assume:ds} hold.
Consider the iterates generated by
\begin{equation}\label{eqn:rule}
\theta_i(k\hspace{-0.5mm}+\hspace{-0.5mm}1) \hspace{-0.5mm}= \hspace{-0.5mm}\sum_{j=1}^m [W(k)]_{ij} \theta_j(k) + \e_i(k) \text{ for all } i \in V.
\end{equation}
Suppose there exists a non-negative non-increasing scalar sequence $\{\alpha_k\}$ such that
$\sum_{k=0}^{\infty} \alpha_k\|\e_i(k)\| < \infty $ for all $i \in V.$
Then, for all $i,j \in V$,
\[
\sum_{k=0}^{\infty} \alpha_k \|\theta_i(k) -\theta_j(k)\| < \infty.
\]
\end{lemma}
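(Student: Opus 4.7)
The plan is to unroll the recursion~\eqref{eqn:rule} and express $\theta_i(k+1)$ in terms of the transition matrix $\Phi$ defined in~\eqref{eqn:Phi} and the past perturbations. Iterating the update gives
\[
\theta_i(k+1) = \sum_{l=1}^m [\Phi(k,0)]_{il}\,\theta_l(0) + \sum_{s=0}^{k-1}\sum_{l=1}^m [\Phi(k,s+1)]_{il}\,\epsilon_l(s) + \epsilon_i(k).
\]
Subtracting the corresponding expression for $\theta_j(k+1)$ yields
\[
\theta_i(k+1)-\theta_j(k+1) = \sum_{l=1}^m \Delta_{ij,l}(k,0)\theta_l(0) + \sum_{s=0}^{k-1}\sum_{l=1}^m \Delta_{ij,l}(k,s+1)\epsilon_l(s) + \epsilon_i(k)-\epsilon_j(k),
\]
where $\Delta_{ij,l}(k,s):=[\Phi(k,s)]_{il}-[\Phi(k,s)]_{jl}$. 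By Lemma~\ref{lem:Phi}(b), each such difference satisfies $|\Delta_{ij,l}(k,s)|\le 2\theta\beta^{k-s}$, since adding and subtracting $1/m$ gives the bound from the triangle inequality.

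Next, I would take norms, use this geometric bound, multiply the resulting inequality by $\alpha_k$, and sum over $k$. This produces three terms to control: a term proportional to $\sum_k \alpha_k \beta^k \sum_l\|\theta_l(0)\|$, a double-sum term $\sum_k \alpha_k \sum_{s=0}^{k-1}\beta^{k-s-1}\sum_l\|\epsilon_l(s)\|$, and a tail term $\sum_k \alpha_k(\|\epsilon_i(k)\|+\|\epsilon_j(k)\|)$. The first is trivially finite because $\{\alpha_k\}$ is non-increasing (hence bounded by $\alpha_0$) and $\beta\in(0,1)$, so $\sum_k \alpha_k\beta^k \le \alpha_0/(1-\beta)$. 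The third is finite directly by the hypothesis $\sum_k \alpha_k\|\epsilon_l(k)\|<\infty$.

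The central step, and the one requiring the monotonicity hypothesis on $\{\alpha_k\}$, is the double-sum term. Here I would swap the order of summation to get $\sum_s\|\epsilon_l(s)\|\sum_{k>s}\alpha_k\beta^{k-s-1}$. Since $\alpha_k\le \alpha_s$ for all $k\ge s$ by the non-increasing property, the inner sum is bounded by $\alpha_s/(1-\beta)$, so the whole expression is at most $(1-\beta)^{-1}\sum_s \alpha_s\|\epsilon_l(s)\|$, which is finite by assumption. Combining the three bounds gives $\sum_{k=0}^\infty \alpha_k\|\theta_i(k)-\theta_j(k)\|<\infty$ for every pair $i,j$, completing the proof.

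The main obstacle I anticipate is handling the double summation cleanly; one must be careful that neither $\alpha_k$ alone nor $\beta^{k-s}$ alone is summable in the way needed, and that it is precisely the monotonicity of $\{\alpha_k\}$ together with the geometric decay of $\beta^{k-s}$ that allows the interchange of summation to reduce the inner convolution sum back to the single series $\sum_s\alpha_s\|\epsilon_l(s)\|$ controlled by hypothesis.
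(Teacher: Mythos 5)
Your argument is correct: the unrolling of the recursion via $\Phi(k,s)$, the bound $|[\Phi(k,s)]_{il}-[\Phi(k,s)]_{jl}|\le 2\theta\beta^{k-s}$ from Lemma~\ref{lem:Phi}(b), and the interchange of summation using the non-increasing property of $\{\alpha_k\}$ to reduce the convolution term to $\sum_s\alpha_s\|\e_l(s)\|$ all go through (modulo the harmless index shift between $\alpha_k$ and $\alpha_{k+1}$, absorbed by monotonicity). The paper does not prove this lemma but imports it from \cite[Theorem 4.2]{Ram2012}, and your proof is essentially the standard argument used there, so there is nothing substantive to contrast.
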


Using Lemma \ref{lemma:ram},
we prove the following disagreement results that will be important in our analysis later.

\begin{lemma} \label{lem:disagree}
Let Assumptions \ref{assume:f},
\ref{assume:nc} and \ref{assume:ds} hold. Also, assume that the stepsize sequence
$\{\a_k\}$ is non-increasing and such that $\sum_{k=0}^\infty \a_k^2<\infty$.
Define
\[
e_i(k)=x_i(k+1)-v_i(k)   \qquad\hbox{for all $i\in V$ and $k\ge0$}.
\]
Then, we have almost surely
\begin{equation}\label{eqn:lem_e}
\sum_{k=0}^\infty \|e_i(k)\|^2 <\infty    \quad \hbox{for all $i \in V$},
\end{equation}
\begin{equation}\label{eqn:lem_disagree}
\sum_{k=0}^\infty\a_k\|v_i(k)-\bar v(k)\|<\infty    \quad \hbox{for all $i \in V$},
\end{equation}
where $\bar v(k)=\frac{1}{m}\sum_{\ell=1}^m v_\ell(k)$.
\end{lemma}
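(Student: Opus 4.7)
\textbf{Proof plan for Lemma~\ref{lem:disagree}.} The strategy is to establish the two summability claims in order: first a pointwise bound on $\|e_i(k)\|$ that reduces \eqref{eqn:lem_e} to Lemma~\ref{lemma:key}, then use \eqref{eqn:lem_e} together with $\sum\alpha_k^2<\infty$ to meet the hypothesis of the ``perturbed consensus'' result (Lemma~\ref{lemma:ram}), and finally convert agent-wise disagreement of $x_i(k)$'s into disagreement of $v_i(k)$'s via double stochasticity.

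\textbf{Step 1: bounding $e_i(k)$.} Using $e_i(k) = \mathsf{\Pi}_{\mathcal{X}_i^{\Omega_i(k)}}[v_i(k)-\alpha_k\nabla f_i(v_i(k))] - v_i(k)$, the triangle inequality and non-expansiveness of projection (Lemma~\ref{lem:proj}(a)) give
\[
\|e_i(k)\| \le \alpha_k\|\nabla f_i(v_i(k))\| + \dist(v_i(k),\mathcal{X}_i^{\Omega_i(k)}).
\]
Since $\mathcal{X}\subseteq \mathcal{X}_i^{\Omega_i(k)}$ we have $\dist(v_i(k),\mathcal{X}_i^{\Omega_i(k)})\le \dist(v_i(k),\mathcal{X})$. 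Combining $\|\nabla f_i(\mathsf{\Pi}_{\mathcal{X}}[v_i(k)])\|\le G_f$ with the Lipschitz gradient property (Assumption~\ref{assume:f}(c,d)) yields $\|\nabla f_i(v_i(k))\|\le G_f + L\,\dist(v_i(k),\mathcal{X})$. Squaring and using $(a+b+c)^2\le 3(a^2+b^2+c^2)$ produces
\[
\|e_i(k)\|^2 \le 3G_f^2\alpha_k^2 + (3L^2\alpha_k^2 + 3)\,\dist^2(v_i(k),\mathcal{X}).
\]
Because $\alpha_k\to 0$, the coefficient $3L^2\alpha_k^2+3$ is bounded, so summing and invoking $\sum\alpha_k^2<\infty$ together with Lemma~\ref{lemma:key} proves \eqref{eqn:lem_e}.

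\textbf{Step 2: invoking Lemma~\ref{lemma:ram}.} By AM-GM, $\alpha_k\|e_i(k)\|\le \tfrac12(\alpha_k^2+\|e_i(k)\|^2)$, hence $\sum_k \alpha_k\|e_i(k)\|<\infty$ a.s.\ by Step~1. The algorithm satisfies
\[
x_i(k+1) = \sum_{j=1}^m [W(k)]_{ij}\,x_j(k) + e_i(k),
\]
which is exactly the perturbed consensus recursion~\eqref{eqn:rule} with $\theta_i=x_i$ and $\varepsilon_i=e_i$. Assumptions~\ref{assume:nc} and~\ref{assume:ds} and the summability just established let me apply Lemma~\ref{lemma:ram} to conclude
\[
\sum_{k=0}^\infty \alpha_k\,\|x_i(k)-x_j(k)\| < \infty \quad\text{a.s.\ for all } i,j\in V.
\]

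\textbf{Step 3: passing from $x_i$ to $v_i$.} Double stochasticity of $W(k)$ yields $\bar v(k)=\frac{1}{m}\sum_\ell\sum_j [W(k)]_{\ell j}x_j(k)=\frac{1}{m}\sum_j x_j(k)=\bar x(k)$. Thus
\[
v_i(k)-\bar v(k) = \sum_{j=1}^m [W(k)]_{ij}\bigl(x_j(k)-\bar x(k)\bigr),
\]
so $\|v_i(k)-\bar v(k)\|\le \sum_{j=1}^m \|x_j(k)-\bar x(k)\|\le \frac{1}{m}\sum_{j,\ell}\|x_j(k)-x_\ell(k)\|$ after writing $x_j(k)-\bar x(k)$ as an average. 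Multiplying by $\alpha_k$ and summing reduces \eqref{eqn:lem_disagree} to the conclusion of Step~2.

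\textbf{Main obstacle.} The only nontrivial point is handling $\|\nabla f_i(v_i(k))\|$ in Step~1, since the bound $G_f$ is only assumed on $\mathcal{X}$ and the iterates $v_i(k)$ need not stay in $\mathcal{X}$; the Lipschitz gradient assumption is precisely what allows this excess to be charged to $\dist(v_i(k),\mathcal{X})$, which is tightly controlled by Lemma~\ref{lemma:key}. Everything else is a clean application of non-expansiveness, double stochasticity, and the cited perturbed-consensus lemma.
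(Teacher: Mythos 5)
Your proposal is correct and follows essentially the same route as the paper: bound $\|e_i(k)\|^2$ by a constant multiple of $\dist^2(v_i(k),\mathcal{X})$ plus a constant multiple of $\alpha_k^2$ (the paper routes the triangle inequality through $z_i(k)=\mathsf{\Pi}_{\mathcal{X}}[v_i(k)]$ rather than through $\mathsf{\Pi}_{\mathcal{X}_i^{\Omega_i(k)}}[v_i(k)]$, but both reduce \eqref{eqn:lem_e} to Lemma~\ref{lemma:key} and $\sum_k\alpha_k^2<\infty$ in the same way), then apply the AM--GM step, Lemma~\ref{lemma:ram}, and double stochasticity exactly as in the paper's Steps 2 and 3. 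Your closing remark about charging the excess of $\|\nabla f_i(v_i(k))\|$ over $G_f$ to $L\,\dist(v_i(k),\mathcal{X})$ is precisely the device the paper uses in \eqref{eqn:e_final}.
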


\def\provik{\mathsf{\Pi}_{\mathcal{X}}[v_i(k)]}
\begin{proof}
Define $z_i(k) \triangleq \mathsf{\Pi}_{\mathcal{X}}[v_i(k)]$.
Consider $\|e_i(k)\|$, for which we can write
\begin{align*}
\|e_i(k)\|
\le{}& \|x_i(k+1) - z_i(k)\| +\|z_i(k) - v_i(k)\|\\
= {}&\left\|\mathsf{\Pi}_{\mathcal{X}_i^{\Omega_i(k)}}[v_i(k) - \alpha_k \nabla f_i(v_i(k))]
- z_i(k)\right\|\\
{}&+\left\|z_i(k)- v_i(k)\right\|.
\end{align*}
Since $\mathcal{X} \subseteq \mathcal{X}_i^{\Omega_i(k)}$ and
$z_i(k) \in \mathcal{X}$, we have $z_i(k)\in \mathcal{X}_i^{\Omega_i(k)}$.
Using the projection theorem (Lemma \ref{lem:proj}), we obtain
\allowdisplaybreaks{
\begin{align}\label{eqn:e_final}
\|e_i&(k)\| \nonumber\\
\leq & \|v_i(k) - \alpha_k \nabla f_i(v_i(k)) -z_i(k)\|+\|z_i(k)- v_i(k)\|\nonumber\\
\leq & 2\| v_i(k)- z_i(k)\| + \alpha_k \|\nabla f_i(v_i(k))\|\nonumber\\
\le & 2\|v_i(k) - z_i(k)\| +\alpha_k\|\nabla f_i(z_i(k))\| \nonumber\\
&+ \alpha_k \|\nabla f_i(v_i(k))-\nabla f_i(z_i(k))\|\nonumber\\
\le & (2+\a_0L)\| v_i(k) - z_i(k)\| + \alpha_k G_f,
\end{align}
}
where the last inequality follows by using $\a_k\le \a_0$, the Lipschitz gradient property of $f_i$
and the gradient boundedness property (Assumptions~\ref{assume:f}(c) and \ref{assume:f}(d)).
Therefore, applying $(a+b)^2 \le 2a^2 + 2b^2$ in inequality (\ref{eqn:e_final}), we have for all $i\in V$ and $k\ge0$,
\begin{equation}\label{eqn:e}
\|e_i(k)\|^2 \leq 2(2+\a_0 L)^2\| v_i(k) - z_i(k)\|^2 + 2\alpha_k^2 G_f^2.
\end{equation}
Recall that we defined $z_i(k) \triangleq \mathsf{\Pi}_{\Xc}[v_i(k)]$,
so we have $\|v_i(k) - z_i(k)\| = \dist (v_i(k),\Xc)$.
In the light of Lemma~\ref{lemma:key}, we also have $\sum_{k=0}^\infty \|v_i(k) - z_i(k)\|^2 <\infty$
almost surely.
Since $\sum_{k=0}^\infty\a_k^2<\infty$, we conclude that
\[
\sum_{k=0}^\infty \|e_i(k)\|^2 <\infty \quad \text{for all } i \in V \textit{ a.s.}
\]

By applying the inequality $2ab\le a^2+b^2$ to each term $\a_k\|e_i(k)\|$, we see that for all $i \in V$ almost surely
\begin{align*}
\sum_{k=0}^\infty \a_k\|e_i(k)\|\le
\frac{1}{2}\sum_{k=0}^\infty\a_k^2 + {}&\frac{1}{2}\sum_{k=0}^\infty\|e_i(k)\|^2<\infty.
\end{align*}

Now, we note that $x_i(k+1)=v_i(k)+e_i(k)$
with $v_i(k)=\sum_{j=1}^m[W(k)]_{ij} x_j(k)$
and  the error $e_i(k)$ satisfying $\sum_{k=0}^\infty \a_k\|e_i(k)\|<\infty$ almost surely.
Therefore, by Lemma~\ref{lemma:ram}, it follows that
\begin{equation}\label{eqn:sumfin}
\sum_{k=0}^\infty\a_k\|x_i(k)-x_j(k)\|<\infty \hbox{ for all $i$ and $j$ }  a.s.
\end{equation}
Next, we consider $\|v_i(k)- \bar v(k)\|$.
Recalling that $v_i(k)=\sum_{j=1}^m [W(k)]_{ij}\, x_j(k)$ (see~\eqref{eqn:algo3mix})
and $W(k)$ is stochastic (Assumption~\ref{assume:ds}), and by using the
convexity of the norm, we obtain
\begin{align*}
\|v_i(k)- \bar v(k)\|\le {}& \sum_{j=1}^m w_{ij}(k)\left\|x_j(k)-\bar v(k)\right\|\\
\le {}&\sum_{j=1}^m \left\|x_j(k)- \frac{1}{m}\sum_{\ell=1}^m x_\ell(k)\right\|,
\end{align*}
where in the last equality we use $0\le [W(k)]_{ij}\le 1$ and
$\bar v(k)=\frac{1}{m}\sum_{\ell=1}^m x_\ell(k)$,
which  holds since $v_i(k)=\sum_{j=1}^m [W(k)]_{ij}\, x_j(k)$ and each $W(k)$ is doubly stochastic.
Therefore, by using the convexity of the norm again, we see
\[
\sum_{j=1}^m\hspace{-0.5mm} \left\|x_j(k) \hspace{-0.5mm}-\hspace{-0.5mm} \frac{1}{m}\hspace{-0.5mm}\sum_{\ell=1}^m x_\ell (k)\right\|
\hspace{-0.5mm}\le\hspace{-0.5mm} \frac{1}{m}\hspace{-0.5mm}\sum_{j=1}^m \sum_{\ell=1}^m\hspace{-0.5mm}\left\|x_j(k)\hspace{-0.5mm}-\hspace{-0.5mm} x_\ell(k)\right\|.
\]
We thus have
\[\a_k\|v_i(k)- \bar v(k)\|\le \frac{\a_k}{m}\sum_{j=1}^m \sum_{\ell=1}^m\left\|x_j(k)- x_\ell(k)\right\|,\]
and by using the relation in~\eqref{eqn:sumfin}, we conclude that
\[\sum_{k=0}^\infty\a_k\|v_i(k)- \bar v(k)\|<\infty    \qquad \hbox{for all $i\in V$ } a.s.\]
\end{proof}

\section{Almost Sure Convergence of DRP Algorithm\label{sec:conv}}
We are now ready to assert the convergence of the method (\ref{eqn:algo3})-(\ref{eqn:algo4})
using the lemmas established in Section \ref{sec:lem}.
To outline the rough idea of the proof, let us note that
Lemma~\ref{lemma:key} allows us to infer that $v_i(k)$ approaches the set $\Xc$.
Lemma~\ref{lem:disagree} will allow us to claim that any two sequences $\{v_i(k)\}$ and $\{v_j(k)\}$
have the same accumulation almost surely, under some mild assumptions on the stepsize.
To claim the convergence of the iterates to an optimal solution, it remains to relate
the accumulation points of $\{v_i(k)\}$ to the optimal solutions of problem~\eqref{eqn:prob}.
This last piece is provided by the iterate relation of Lemma~\ref{lem:first}, supported
by the supermartingale theorem.

From here onward, we use the following notation regarding the optimal value and optimal
solutions of problem~\eqref{eqn:prob}:
\[
f^* = \min_{x\in\mathcal{X}} f(x), \qquad \mathcal{X}^* = \{x \in \mathcal{X} \mid f(x) = f^*\}.
\]

We have the following convergence result.

\begin{proposition} \label{prop:as}
Let Assumptions \ref{assume:f}-\ref{assume:ds} hold.
Let the stepsize be such that $\sum_{k=0}^{\infty} \alpha_k = \infty$
and $\sum_{k=0}^{\infty} \alpha_k^2 < \infty$.
Assume that problem~\eqref{eqn:prob} has a nonempty optimal set $\Xc^*$.
Then, the iterates $\{x_i(k)\}$, $i\in V$, generated by
the method~(\ref{eqn:algo3})-(\ref{eqn:algo4}) converge almost surely
to some random point in the optimal set $\Xc^*$, i.e.,
for some random vector $x^\star\in \Xc^*$,
\[
\lim_{k \rightarrow \infty} x_i(k) = x^\star \qquad \text{for all}\quad i \in V \ a.s.
\]
\end{proposition}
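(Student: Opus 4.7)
The plan is to construct a Robbins--Siegmund inequality (Theorem~\ref{thm:super}) for $V_k(x^*)\triangleq \sum_{i=1}^m\|x_i(k)-x^*\|^2$, $x^*\in \mathcal{X}^*$, using Lemma~\ref{lem:first} as the engine. The two outputs of the supermartingale argument that will drive the rest of the proof are (i) almost-sure convergence of $V_k(x^*)$, and (ii) $\sum_k \alpha_k\bigl(f(\mathsf{\Pi}_\mathcal{X}[\bar v(k)])-f^*\bigr)<\infty$ a.s. Combining these with the summability estimates of Lemmas~\ref{lemma:key} and \ref{lem:disagree}, I will identify a random common limit $y^\star\in \mathcal{X}^*$ for all agent sequences.

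To derive the inequality, I would apply Lemma~\ref{lem:first} with the identifications $y=x_i(k+1)$, $x=v_i(k)$, $\mathcal{Y}=\mathcal{X}_i^{\Omega_i(k)}$, $\alpha=\alpha_k$, $\phi=f_i$, and with the choices $\check{x}=x^*\in \mathcal{X}^*$ (feasible since $\mathcal{X}^*\subseteq \mathcal{X}\subseteq \mathcal{X}_i^{\Omega_i(k)}$) and $z=\mathsf{\Pi}_\mathcal{X}[v_i(k)]$, so that $\|x-z\|^2=\mathrm{dist}^2(v_i(k),\mathcal{X})$ and $\|\nabla f_i(\check{x})\|\le G_f$. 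Summing over $i$, the doubly-stochastic property of $W(k)$ gives $\sum_i\|v_i(k)-x^*\|^2\le V_k(x^*)$, and the nonexpansiveness of $\mathsf{\Pi}_\mathcal{X}$ combined with the bound $\|\nabla f_i\|\le G_f$ on $\mathcal{X}$ yields $\sum_i f_i(\mathsf{\Pi}_\mathcal{X}[v_i(k)])\ge f(\mathsf{\Pi}_\mathcal{X}[\bar v(k)]) - G_f\sum_i\|v_i(k)-\bar v(k)\|$. The net result is a pathwise bound of the form $V_{k+1}(x^*)\le (1+A\alpha_k^2)\,V_k(x^*) - u_k + b_k$, where $u_k = 2\alpha_k\bigl(f(\mathsf{\Pi}_\mathcal{X}[\bar v(k)])-f^*\bigr)\ge 0$ (nonnegativity uses $\mathsf{\Pi}_\mathcal{X}[\bar v(k)]\in \mathcal{X}$) and $b_k$ collects an $O(\alpha_k^2)$ term, a multiple of $\alpha_k\sum_i\|v_i(k)-\bar v(k)\|$, and a multiple of $\sum_i\mathrm{dist}^2(v_i(k),\mathcal{X})$.

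Since the right-hand side is $\mathcal{F}_k$-measurable, this is a supermartingale inequality after conditioning on $\mathcal{F}_k$. Lemma~\ref{lemma:key} gives $\sum_k\mathrm{dist}^2(v_i(k),\mathcal{X})<\infty$ a.s.\ and Lemma~\ref{lem:disagree} gives $\sum_k\alpha_k\|v_i(k)-\bar v(k)\|<\infty$ a.s., so $\sum_k b_k<\infty$ a.s.\ and Theorem~\ref{thm:super} yields (i) and (ii). Conclusion (i) entails almost-sure boundedness of $\{x_i(k)\}$; conclusion (ii) together with $\sum_k\alpha_k=\infty$ gives $\liminf_k\bigl(f(\mathsf{\Pi}_\mathcal{X}[\bar v(k)])-f^*\bigr)=0$. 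Adding to this $\mathrm{dist}(v_i(k),\mathcal{X})\to 0$ (from Lemma~\ref{lemma:key}), $\liminf_k\|v_i(k)-\bar v(k)\|=0$ (Lemma~\ref{lem:disagree}), and $\liminf_k\|x_i(k)-x_j(k)\|=0$ (from \eqref{eqn:sumfin} in the proof of Lemma~\ref{lem:disagree}), I can extract a common subsequence $k_n$ along which all these quantities vanish. Boundedness then delivers a further subsequence, still denoted $k_n$, with $x_i(k_n)\to y^\star$ (the same $y^\star$ for all $i$, thanks to the vanishing pairwise disagreement). Since $\bar v(k_n)=\bar x(k_n)\to y^\star$ and $\mathrm{dist}(\bar v(k_n),\mathcal{X})\to 0$, closedness of $\mathcal{X}$ forces $y^\star\in \mathcal{X}$; continuity of $f$ and of $\mathsf{\Pi}_\mathcal{X}$ together with $f(\mathsf{\Pi}_\mathcal{X}[\bar v(k_n)])\to f^*$ force $f(y^\star)=f^*$, hence $y^\star\in \mathcal{X}^*$.

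The final step promotes subsequential to full convergence: invoking (i) with $x^*=y^\star$, the fact that $V_{k_n}(y^\star)\to 0$ pins the limit of $V_k(y^\star)$ at $0$, so $x_i(k)\to y^\star$ for all $i$. The main obstacle is that (i) was obtained for each fixed \emph{deterministic} $x^*\in \mathcal{X}^*$, whereas the candidate limit $y^\star$ is a random vector depending on the sample path; the standard remedy is to apply Theorem~\ref{thm:super} simultaneously for a countable dense subset of $\mathcal{X}^*$ (intersecting countably many null sets) and to transfer the conclusion to the random $y^\star$ via continuity of $V_k(\cdot)$ combined with the almost-sure boundedness of the iterates.
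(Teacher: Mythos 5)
Your proposal is correct and follows essentially the same route as the paper: Lemma~\ref{lem:first} with $z=\mathsf{\Pi}_{\mathcal X}[v_i(k)]$, summation over agents using the doubly stochastic weights, the Robbins--Siegmund theorem applied to $\sum_{i}\|x_i(k)-x^*\|^2$, and the summability estimates of Lemmas~\ref{lemma:key} and~\ref{lem:disagree} driving the endgame. The only cosmetic differences are that you park the $\dist^2(v_i(k),\mathcal X)$ term inside $b_k$ (justified by Lemma~\ref{lemma:key}) rather than cancelling it against the conditional expectation of $\|x_i(k+1)-v_i(k)\|^2$ via Assumption~\ref{assume:c}, you run the accumulation-point argument on the iterates themselves rather than on $\bar z(k)$, and you make explicit the countable-dense-subset argument needed to apply the supermartingale conclusion at the random limit point, a step the paper leaves implicit.
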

\begin{proof}
We use the definition of the iterate $x_i(k)$ in (\ref{eqn:algo3})-(\ref{eqn:algo4})
and lemma \ref{lem:first} with the following identification: $\mathcal{Y} = \mathcal{X}_i^{\Omega_i(k)}$,
$y = x_i(k+1)$, $x = v_i(k)$, $z = z_i(k) \triangleq \mathsf{\Pi}_{\mathcal{X}}[v_i(k)]$, $\alpha = \alpha_k$ and $\tau = c$ where $c$ is the constant from the relation (\ref{eqn:c}).
Thus, for any $\check{x} \in \mathcal{X}$, $k \geq 0$ and $i \in V$, we have
\begin{align*}
\|{}&x_i(k+1)-\check{x}\|^2 \leq  (1+A\alpha_k^2) \|v_i(k)-\check{x}\|^2\nonumber\\
{}& - 2\alpha_k (f_i(z_i(k))-f_i(\check{x})) - \frac{3}{4}\|x_i(k+1)-v_i(k)\|^2 \nonumber\\
{}& + \left(\frac{3}{8c}+2\alpha_kL\right)\|v_i(k)-z_i(k)\|^2 + B\alpha_k^2\|\nabla f_i(\check{x})\|^2,
\end{align*}
with $A = 8L^2+16cL^2$ and $B = 8c+8$.
We next sum the preceding relations over $i=1,\ldots,m$.
Also, we use the convexity of the squared-norm (cf.~\eqref{eqn:norm})
and the doubly stochasticity of the weights to obtain the following relation:
\allowdisplaybreaks{
\begin{align*}
\sum_{i=1}^m \|v_i(k)-\check{x}\|^2
\leq {}&\sum_{i=1}^m\sum_{j=1}^m [W(k)]_{ij}\|x_j(k)-\check{x}\|^2\nonumber\\
= {}&\sum_{j=1}^m \left(\sum_{i=1}^m [W(k)]_{ij}\right)\|x_j(k)-\check{x}\|^2\nonumber\\
= {}&\sum_{j=1}^m \|x_j(k)-\check{x}\|^2.
\end{align*}
}
By doing so, and taking into account that the gradients $\|\nabla f_i(\check{x})\|$ are bounded over $\Xc$
by a scalar $G_f$ (Assumption~\ref{assume:f}(d)),
we obtain for any $\check{x} \in \Xc$ and $k\ge 0$,
\begin{align}\label{eqn:eq}
{}&\sum_{i=1}^m \|x_i(k+1)-\check{x}\|^2 \leq
 (1+A\alpha_k^2) \sum_{i=1}^m \|x_i(k)-\check{x}\|^2\nonumber\\
{}&-\hspace{-0.5mm} 2\alpha_k \hspace{-0.5mm}\sum_{i=1}^m \hspace{-0.5mm}(f_i(z_i(k))\hspace{-1mm}-\hspace{-1mm}f_i(\check{x})\hspace{-0.5mm})-\hspace{-0.5mm} \frac{3}{4}\hspace{-0.5mm}\sum_{i=1}^m\hspace{-0.5mm} \|x_i(k\hspace{-0.5mm}+\hspace{-0.5mm}1)\hspace{-1mm}-\hspace{-1mm}v_i(k)\|^2 \nonumber\\
{}& + \hspace{-0.5mm}\left(\hspace{-0.5mm}\frac{3}{8c}\hspace{-0.5mm}+\hspace{-0.5mm}2\alpha_kL\hspace{-0.5mm}\right)\hspace{-0.5mm}\sum_{i=1}^m\hspace{-0.5mm} \|v_i(k)\hspace{-0.5mm}-\hspace{-0.5mm}z_i(k)\|^2
\hspace{-0.5mm}+\hspace{-0.5mm} mB\alpha_k^2 G_f^2. 
\end{align}

Let $\bar{z}(k) \triangleq \frac{1}{m}\sum_{\ell =1}^m z_\ell(k)$ and recall that $f(x)=\sum_{i=1}^m f_i(x)$.
Using $\bar{z}(k)$ and $f$,
we can rewrite the second term on the right hand side in (\ref{eqn:eq}) as follows.
\begin{align}\label{eqn:rewrite}
\sum_{i=1}^m (f_i(z_i(k))-f_i(\check{x})) {}
 = & \sum_{i=1}^m (f_i(z_i(k))-f_i(\bar{z}(k))) \nonumber\\
 & +  (f(\bar{z}(k))-f(\check{x})).
\end{align}
We estimate the first term on the right hand side of the above equation as follows.
Using the convexity of each function $f_i$, we obtain
\begin{align*}
\sum_{i=1}^m & (f_i(z_i(k))\hspace{-0.5mm}-\hspace{-0.5mm}f_i(\bar{z}(k)))
\hspace{-0.5mm} \geq \hspace{-0.5mm}\sum_{i=1}^m  \langle \nabla f_i(\bar{z}(k),z_i(k)\hspace{-0.5mm}-\hspace{-0.5mm}\bar{z}(k)\rangle\nonumber\\
&\ge -\sum_{i=1}^m  \|\nabla f_i(\bar{z}(k))\|\,\|z_i(k)-\bar{z}(k)\|.
\end{align*}
Since $\bar{z}(k)$ is a convex combination of points $z_i(k)\in \Xc$, it follows that $\bar z(k) \in \Xc$.
This observation and Assumption~\ref{assume:f}(d), stating that the gradients $\nabla f_i(x)$ are uniformly bounded for $x\in\Xc$, yield
\begin{align}\label{eqn:a}
\sum_{i=1}^m\hspace{-0.5mm} (f_i(z_i(k))\hspace{-0.5mm}-\hspace{-0.5mm}f_i(\bar{z}(k)))
\hspace{-0.5mm}\ge\hspace{-0.5mm} -G_f\hspace{-1mm}\sum_{i=1}^m \hspace{-0.5mm}\|z_i(k)\hspace{-0.5mm}-\hspace{-0.5mm}\bar{z}(k)\|.
\end{align}
 We next consider the term $\|z_i(k)-\bar{z}(k)\|$, for which by using
 $\bar{z}(k) \triangleq \frac{1}{m}\sum_{\ell =1}^m z_\ell(k)$ we have
 \begin{align*}
 \|&z_i(k)-\bar{z}(k)\|=\left\|\frac{1}{m}\sum_{\ell=1}^m(z_i(k)-z_\ell(k))\right\|\nonumber\\
 \le& \frac{1}{m}\sum_{\ell=1}^m\|z_i(k)-z_\ell(k)\|
 \le \frac{1}{m}\sum_{\ell=1}^m\|v_i(k)-v_\ell(k)\|,
 \end{align*}
 where the first inequality is obtained by the convexity of the norm (see \eqref{eqn:norm})
 and the last inequality follows by the non-expansive projection property (Lemma \ref{lem:proj}).
 Furthermore, by using $\|v_i(k)-v_\ell(k)\|\le \|v_i(k)-\bar v(k)\|+ \|v_\ell(k)- \bar v(k)\|$,
 we obtain for every $i\in V$,
 \[\|z_i(k)-\bar{z}(k)\|\le \|v_i(k)-\bar v(k)\|+\frac{1}{m}\sum_{\ell=1}^m\|v_\ell(k)-\bar v(k)\|.\]
Upon summing over $i\in V$, we find that
\begin{align}\label{eqn:sums}
\sum_{i=1}^m \|z_i(k)-\bar{z}(k)\|\le 2 \sum_{i=1}^m\|v_i(k)-\bar v(k)\|.
\end{align}
Combining relations~\eqref{eqn:sums} and~\eqref{eqn:a},
and substituting the resulting relation in
equation (\ref{eqn:rewrite}), we find that
\begin{align*}
\sum_{i=1}^m (f_i(z_i(k))-f_i(\check{x})) {}
\ge& -2G_f \sum_{i=1}^m \|v_i(k)-\bar{v}(k)\|\nonumber\\
&+  (f(\bar{z}(k))-f(\check{x})).
\end{align*}
Finally, by using the preceding estimate in inequality (\ref{eqn:eq}),
we obtain for any $\check{x} \in \mathcal{X}$ and $k \geq 0$,
\begin{align}\label{eqn:rewrite0}
{}&\sum_{i=1}^m \|x_i(k+1)-\check{x}\|^2 \leq  (1+A\alpha_k^2) \sum_{i=1}^m \|x_i(k)-\check{x}\|^2 \nonumber\\
{}& - 2\alpha_k (f(\bar{z}(k))-f(\check{x}))
  - \frac{3}{4}\sum_{i=1}^m \|x_i(k+1)-v_i(k)\|^2\nonumber\\
{}& + \left(\frac{3}{8c}+2\alpha_kL\right)\sum_{i=1}^m \|v_i(k)-z_i(k)\|^2 \nonumber\\
{}&  + 4\alpha_k G_f\sum_{i=1}^m \|v_i(k)-\bar{v}(k)\|
+ mB\alpha_k^2 G_f^2.
\end{align}

By the definition of $x_i(k+1)$, we have $x_i(k+1) \in \Xc_i^{\Omega_i(k)}$, which implies $\|x_i(k+1)-v_i(k)\|\ge \dist(v_i(k),\Xc_i^{\Omega_i(k)})$ for $i \in V$. Also, from the definition of $z_i(k) \triangleq \mathsf{\Pi}_{\Xc}[v_i(k)]$, we have $\|v_i(k)-z_i(k)\| = \dist(v_i(k),\Xc)$ for $i \in V$.
Using these relations and letting $\check{x}=x^*$ for an arbitrary $x^*\in \Xc^*$,  from~\eqref{eqn:rewrite0}
we obtain for all $k\ge 0$,
\allowdisplaybreaks{
\begin{align*}
\sum_{i=1}^m{}& \|x_i(k+1)- x^*\|^2 \leq  (1+A\alpha_k^2) \sum_{i=1}^m\|x_i(k)-x^*\|^2 \nonumber\\
{}& - 2\alpha_k(f(\bar{z}(k))-f^*)
 - \frac{3}{4}\sum_{i=1}^m\dist^2(v_i(k),\Xc_i^{\Omega_i(k)}) \nonumber\\
 {}& + \left(\frac{3}{8c}+2\alpha_kL\right)\sum_{i=1}^m\dist^2(v_i(k),\Xc) \nonumber\\
{}&  + 4 \alpha_k G_f \sum_{i=1}^m \|v_i(k)-\bar{v}(k)\|
+ mB\alpha_k^2 G_f^2.
\end{align*}
}
By taking the expectation conditioned on $\mathcal{F}_{k}$, and noting that $x_i(k)$, $v_i(k)$, $\bar{v}(k)$, and $\bar{z}(k)$ are fully determined by $\mathcal{F}_{k}$, we have almost surely
for all $x^*\in \Xc$ and $k\ge 0$,
\allowdisplaybreaks{
\begin{align*}
{}&\mathsf{E}\Big[\sum_{i=1}^m \|x_i(k+1)-x^*\|^2\mid\mathcal{F}_k\Big] \nonumber\\
 \leq {}& (1+A\alpha_k^2) \sum_{i=1}^m\|x_i(k)-x^*\|^2  - 2\alpha_k(f(\bar{z}(k))-f^*)  \nonumber\\
{} &  - \frac{3}{4}\mathsf{E}\left[\sum_{i=1}^m\dist^2(v_i(k),\Xc_i^{\Omega_i(k)})~|~\mathcal{F}_k\right] \nonumber\\
{}&+  \left(\frac{3}{8c}+2\alpha_kL\right)\sum_{i=1}^m\dist^2(v_i(k),\Xc) \nonumber\\
{}&  + 4\alpha_k G_f \sum_{i=1}^m\|v_i(k)-\bar{v}(k)\|
+ mB\alpha_k^2 G_f^2.
\end{align*}
}
By Assumption \ref{assume:c}, we have
$\mathrm{dist}^2(x,\mathcal{X}) \leq c \mathsf{E}\left[\mathrm{dist}^2(x,\mathcal{X}_i^{\Omega_i(k)})~|~\mathcal{F}_k\right] $
for all $x \in \Xc$ and all $i\in V$.
Furthermore, since $\alpha_k \rightarrow 0$, by choosing $\bar{k}$ large enough so that $2\alpha_kL\le \frac{3}{8c}$, we have for all $k \ge \bar{k}$,
\begin{align*}
- \frac{3}{4}{}&\mathsf{E}\left[\sum_{i=1}^m\dist^2(v_i(k),\Xc_i^{\Omega_i(k)})~|~\mathcal{F}_k\right] \\
{}& + \left(\frac{3}{8c}+2\alpha_kL\right)\sum_{i=1}^m\dist^2(v_i(k),\Xc) \le 0.
\end{align*}
Thus, we obtain almost surely for all $k \ge \bar{k}$ and $x^*\in \Xc^*$,
\begin{align}\label{eqn:final}
\mathsf{E}{}&\Bigg[\sum_{i=1}^m\|x_i(k+1)-x^*\|^2~|~\mathcal{F}_k\Bigg] \nonumber\\
\leq {}& (1+A\alpha_k^2) \sum_{i=1}^m\|x_i(k)-x^*\|^2  \nonumber\\
{}&- 2\alpha_k(f(\bar{z}(k))-f^*)  \nonumber\\
{}&+ 4\alpha_k G_f \sum_{i=1}^m\|v_i(k)-\bar{v}(k)\|
+ mB\alpha_k^2G_f^2.
\end{align}
Since $\bar{z}(k) \in \Xc$, we have $f(\bar{z}(k)) - f^* \ge 0$.
Thus, under the assumption $\sum_{k=0}^{\infty} \alpha_k^2 < \infty$ and Lemma \ref{lem:disagree}, relation~(\ref{eqn:final}) satisfies all the conditions of the supermartingale convergence of
Theorem~\ref{thm:super}.
Hence, the sequence $\{\|x_i(k)-x^*\|^2\}$ is convergent almost surely
for any $i \in V$ and $x^* \in \Xc^*$, and
\[
\sum_{k=0}^{\infty} \alpha_k(f(\bar{z}(k))-f(x^*)) < \infty \quad a.s.
\]
The preceding relation and the condition $\sum_{k=0}^{\infty} \alpha_k= \infty$ imply that
\begin{equation}\label{eqn:res1}
\liminf_{k\rightarrow \infty} (f(\bar{z}(k))-f(x^*))=0\quad a.s.
\end{equation}
By Lemma~\ref{lemma:key}, noting that $z_i(k)=\mathsf{\Pi}_{\Xc}[v_i(k)]$, we have
$\sum_{k=1}^{\infty}\sum_{i=1}^m\|v_i(k)-z_i(k)\|^2 < \infty$ almost surely,
implying
\begin{equation}\label{eqn:res2}
\lim_{k\rightarrow \infty}\|v_i(k)-z_i(k)\|=0 \quad \text{for all } i \in V \textit{ a.s.}
\end{equation}

Recall that
the sequence $\{\|x_i(k)-x^*\|\}$ is convergent almost surely for all $i \in V$ and every $x^* \in \Xc^*$.
Then, in view of relation (\ref{eqn:algo3}), we have that
the sequence $\{\|v_i(k)-x^*\|\}$ is also
convergent almost surely for all $i \in V$ and $x^* \in \Xc^*$.
By relation~(\ref{eqn:res2}) it follows that
$\{\|z_i(k)-x^*\|\}$ is also convergent almost surely for all $i \in V$ and $x^* \in \Xc^*$.
Since $\|\bar v_i(k)-x^*\|\le \frac{1}{m}\sum_{i=1}^m\|v_i(k)-x^*\|$
and  the sequence $\{\|v_i(k)-x^*\|\}$ is convergent almost surely for all $i \in V$ and $x^* \in \Xc^*$,
it follows that $\{\|\bar v(k)-x^*\|\}$ is convergent almost surely for all $x^* \in \Xc^*$.
Using a similar argument, we can conclude that
$\{\|\bar{z}(k)-x^*\|\}$ is convergent almost surely for all $x^* \in \Xc^*$.
As a particular consequence, it follows that the sequences $\{\bar v(k)\}$ and $\{\bar z(k)\}$
are almost surely bounded and, hence, they have accumulation points.
From relation (\ref{eqn:res1}) and the continuity of $f$, it follows that
the sequence $\{\bar{z}(k)\}$ must have one accumulation point in the set $\Xc^*$ almost surely.
This and the fact that $\{\|\bar{z}(k)-x^*\|\}$ is convergent almost surely for every $x^* \in \Xc^*$ imply that for a random point $x^\star \in \Xc^*$,
\begin{equation}\label{eqn:z_final}
\lim_{k \rightarrow \infty} \bar{z}(k) = x^\star \quad a.s.
\end{equation}
Now, from $\bar{z}(k) = \frac{1}{m}\sum_{\ell=1}^m z_\ell(k)$
and $\bar{v}(k) = \frac{1}{m}\sum_{i=\ell}^m v_\ell(k)$, using
relation~(\ref{eqn:res2}) and the convexity of the norm (cf.~\eqref{eqn:norm}), we obtain almost surely
\[\lim_{k\to\infty}\|\bar{v}(k)-\bar{z}(k)\|\le\frac{1}{m}\sum_{\ell=1}^m
\lim_{k\to\infty}\|v_\ell(k)-z_\ell(k)\|=0.\]
In view of relation~\eqref{eqn:z_final}, it follows that
\begin{equation}\label{eqn:v_final}
\lim_{k \rightarrow \infty} \bar{v}(k) = x^* \quad a.s.
\end{equation}
By relation (\ref{eqn:lem_disagree}) in Lemma \ref{lem:disagree}, we have
\begin{align}\label{eqn:liminfv}
\liminf_{k \rightarrow \infty} \|v_i(k)-\bar{v}(k)\| = 0 \quad \text{for all } i \in V \textit{ a.s.}
\end{align}
The fact that $\{\|v_i(k)-x^*\|\}$ is convergent almost surely for all $i$, together with~\eqref{eqn:v_final}
and~\eqref{eqn:liminfv}  implies that
\begin{equation}\label{eqn:consensus}
\lim_{k \rightarrow \infty} \|v_i(k)-x^\star\| = 0 \quad \text{for } i \in V \textit{ a.s.}
\end{equation}
Finally, from relation (\ref{eqn:lem_e}) in Lemma \ref{lem:disagree}, we have
$\lim_{k \rightarrow \infty} \|x_i(k+1)-v_i(k)\| = 0$ for all $i \in V$ almost surely,
which together with the limit in~\eqref{eqn:consensus} yields
$\lim_{k\rightarrow \infty} x_i(k) = x^\star$ for all $i \in V$ almost surely.

\end{proof}

\section{Distributed Mini-Batch Random Projection Algorithm}\label{sec:mini-batches}
As an extension of the algorithm in~\eqref{eqn:algo3}--\eqref{eqn:algo4}, one may
consider an algorithm where the agents use several random projections at each iteration.
Namely, after generating $v_i(k)$ each agent may take (or nature may reveal them)
several random samples $\Omega_i^1(k),\ldots,\Omega^b_i(k)$, where each $\Omega_i^r(k)\in I_i$ and
$b\ge 1$ is the batch-size.
Each collection $\Omega_i^1(k),\ldots,\Omega^b_i(k)$ consists of mutually independent random variables
and is independent of the past realizations.
More specifically, we have $b$ random independent samples of the \textit{iid} random variable
$\Omega_i(k)$ (taking values in $I_i$).
Using the compact form~\eqref{eqn:algo3mix} for the update in~\eqref{eqn:algo3}, in the mini-batch version
of the algorithm, each agent $i\in V$,
performs the following steps:
\allowdisplaybreaks{
\begin{subequations}
\begin{align}
v_i(k) = {}&  \sum_{j=1}^m [W(k)]_{ij}\, x_j(k), \label{eqn:algo3batch}\\
\psi^0_i(k) = {}& v_i(k)-\alpha_k\nabla f_i(v_i(k)),\label{eqn:initproj}\\
\psi^r_i(k) = {}& \mathsf{\Pi}_{\mathcal{X}_i^{\Omega^r_i(k)}} [\psi_i^{r-1}(k)]\hbox{ for }r=1,\ldots,b,
\label{eqn:sucproj}\\
x_i(k+1) = {}& \psi_i^b(k),
\label{eqn:algo4batch}
\end{align}
\end{subequations}
}
where $\alpha_k > 0$ is a stepsize at time $k$ and $x_i(0) \in \mathbb{R}^d$ is an initial estimate of
agent $i$ (which can be random).
The steps in~(\ref{eqn:initproj})--\eqref{eqn:algo4batch}
are the successive (random) projections on the sets $\Xc^{\Omega^1_i(k)},\ldots,\Xc^{\Omega^b_i(k)}$
of the point $v_i(k)-\alpha_k\nabla f_i(v_i(k))$.

The algorithm using mini-batches for random projections is of interest when the set $I_i$ is large, i.e.,
the number of constraint set components $\Xc_i^j$, $j\in I_i$, of the set $\Xc_i=\cap_{j\in I_i}\Xc_i^j$ is large.
In such cases, taking several projection steps is beneficial for reducing the infeasibility of the iterates
$x_i(k)$ with respect to the set $\Xc_i$.  More concretely, if each set $\Xc_i$ is the intersection of about $10^4$ simpler sets, then one sample of these sets will render a poor approximation of the true set $\Xc_i$, whereas 100 samples will provide a better approximation of the set.
Let $\check{x}$ be a point in the feasible set $\Xc$.
If just one sample is considered at each iteration, by the non-expansive projection property (Lemma \ref{lem:proj}),
the distance between the next iterate and a point in $\Xc$ can be estimated as:
\[
\|x_i(k+1)-\check{x}\| = \|\psi_i^1(k)-\check{x}\| \le \|\psi_i^0(k)-\check{x}\|,
\]
whereas if 100 samples are considered for projections,
\begin{align*}
\|x_i(k+1)-\check{x}\| = \|\psi_i^{100}(k)-\check{x}\|\le \ldots \\
\le \|\psi_i^1(k)-\check{x}\| \le \|\psi_i^0(k)-\check{x}\|,
\end{align*}
which may yield a larger infeasibility reduction.

For the algorithm using random mini-batch projections, we have the following convergence result.
\begin{proposition} \label{prop:p2}
Let Assumptions \ref{assume:f}-\ref{assume:ds} hold, and
let the stepsize satisfy $\sum_{k=0}^{\infty} \alpha_k = \infty$ and
$\sum_{k=0}^{\infty} \alpha_k^2 < \infty$.
Assume that problem~\eqref{eqn:prob} has a nonempty optimal set $\Xc^*$.
Then, the iterates $\{x_i(k)\}$, $i\in V$, produced by
the method~(\ref{eqn:algo3batch})-(\ref{eqn:algo4batch}) converge
to some random point in the optimal set $\Xc^*$ almost surely, i.e.,
for some random vector $x^\star\in \Xc^*$,
\[
\lim_{k \rightarrow \infty} x_i(k) = x^\star \qquad \text{for all}\quad i \in V \ a.s.
\]
\end{proposition}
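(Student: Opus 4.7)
The plan is to reduce the mini-batch case to the single-projection case of Proposition~\ref{prop:as} by exploiting the non-expansiveness of each successive projection. The key observation is that for any $\check{x}\in\Xc$ we have $\check{x}\in \Xc_i^{\Omega_i^r(k)}$ for every $r=1,\ldots,b$, since $\Xc\subseteq\Xc_i^j$ for every $j\in I_i$. Therefore, applying Lemma~\ref{lem:proj}(a) repeatedly along the chain $\psi_i^0(k)\to\psi_i^1(k)\to\cdots\to\psi_i^b(k)=x_i(k+1)$ yields
\[
\|x_i(k+1)-\check{x}\|\le \|\psi_i^r(k)-\check{x}\|\le \|\psi_i^0(k)-\check{x}\| \quad\hbox{for all }r=0,1,\ldots,b.
\]
Thus the distance to any feasible point is monotone non-increasing along the inner projection sequence, and in particular $\|x_i(k+1)-\check{x}\|\le \|\psi_i^1(k)-\check{x}\|$.

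First I would re-establish the analog of Lemma~\ref{lemma:key}. Applying Lemma~\ref{lem:first} with $y=\psi_i^1(k)$, $x=v_i(k)$, $\mathcal{Y}=\Xc_i^{\Omega_i^1(k)}$, $\phi=f_i$, and $\check{x}=z=\mathsf{\Pi}_\Xc[v_i(k)]$ gives precisely inequality~\eqref{eqn:e01} with $\Omega_i(k)$ replaced by $\Omega_i^1(k)$. Since $\Omega_i^1(k)$ has the same distribution as $\Omega_i(k)$ and is independent of $\Fc_k$, Assumption~\ref{assume:c} still applies; combined with the feasibility monotonicity bound $\dist(x_i(k+1),\Xc)\le \|\psi_i^1(k)-\mathsf{\Pi}_\Xc[v_i(k)]\|$, the proof of Lemma~\ref{lemma:key} goes through verbatim, yielding $\sum_{k=0}^\infty\dist^2(v_i(k),\Xc)<\infty$ almost surely.

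Next I would re-derive the disagreement estimate of Lemma~\ref{lem:disagree}. Setting $z_i(k)=\mathsf{\Pi}_\Xc[v_i(k)]\in\Xc$, the non-expansiveness of each projection gives $\|x_i(k+1)-z_i(k)\|\le\|\psi_i^0(k)-z_i(k)\|=\|v_i(k)-\alpha_k\nabla f_i(v_i(k))-z_i(k)\|$, so the bound~\eqref{eqn:e_final} on $\|e_i(k)\|$ is recovered exactly. The arguments following~\eqref{eqn:e} then carry over unchanged, producing both $\sum_{k}\|e_i(k)\|^2<\infty$ and $\sum_k \alpha_k\|v_i(k)-\bar v(k)\|<\infty$ almost surely.

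Finally I would replicate the main argument of Proposition~\ref{prop:as}. Applying Lemma~\ref{lem:first} with $y=\psi_i^1(k)$, summing the inequality over $i$, using non-expansiveness to replace $\|\psi_i^1(k)-x^*\|^2$ on the left by the larger quantity we actually want to control, and then using $\|x_i(k+1)-x^*\|^2\le\|\psi_i^1(k)-x^*\|^2$, I obtain the exact analog of~\eqref{eqn:rewrite0} and hence~\eqref{eqn:final}. The supermartingale theorem (Theorem~\ref{thm:super}), combined with the two re-established lemmas and the stepsize conditions, then delivers almost sure convergence of $\{\|x_i(k)-x^*\|^2\}$ for every $x^*\in\Xc^*$ and $\sum_k\alpha_k(f(\bar z(k))-f^*)<\infty$. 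The remainder of the proof of Proposition~\ref{prop:as}, which extracts a common random limit $x^\star\in\Xc^*$ from these two facts via continuity of $f$ and Lemma~\ref{lem:disagree}, applies without modification.

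The main subtle point is the first one: verifying that the supermartingale-type inequality proving Lemma~\ref{lemma:key} is not weakened by the fact that $x_i(k+1)$ is produced by a further $b-1$ projections after $\psi_i^1(k)$. The resolution is precisely the feasibility monotonicity $\|x_i(k+1)-\check{x}\|\le\|\psi_i^1(k)-\check{x}\|$ for $\check{x}\in\Xc$, which shows that additional projections only tighten the inequality. Once this is in hand, the mini-batch modification adds nothing structural to the analysis.
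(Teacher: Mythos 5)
Your proposal is correct and follows essentially the same route as the paper's own proof in the appendix: apply Lemma~\ref{lem:first} to the first projected point $\psi_i^1(k)$, use the non-expansiveness chain $\|x_i(k+1)-\check{x}\|\le\cdots\le\|\psi_i^1(k)-\check{x}\|$ for $\check{x}\in\Xc$ to transfer the resulting inequality to $x_i(k+1)$, re-establish Lemmas~\ref{lemma:key} and~\ref{lem:disagree} for the mini-batch iterates, and then repeat the argument of Proposition~\ref{prop:as} verbatim. The "subtle point" you flag at the end is exactly the one the paper's proof also isolates and resolves in the same way.
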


\begin{proof}
The proof of this result is similar to that of Proposition~\ref{prop:as}. It requires some adjustments
of Lemma~\ref{lemma:key} and Lemma~\ref{lem:disagree}. The proof with these adjustments is
provided in Appendix~\ref{app:p2proof}.
\end{proof}

\section{Application - Distributed Support Vector Machines (DrSVM) \label{sec:DrSVM}}
In this section, we apply our DRP algorithm and its mini-batch variant to
Support Vector Machines (SVMs). We provide a brief introduction to SVMs
in Subsection~\ref{subsec:SVMs}, while in Subsection~\ref{subsec:sim}
we report our numerical results on some data sets that are generously made available by Thorsten Joachims.

\subsection{Support Vector Machines}\label{subsec:SVMs}
Support Vector Machines (SVMs) are popular classification tools with a strong theoretical background.
Given a set of $n$ example-label pairs $\{(a_j,b_j)\}_{j=1}^n$, $a_j \in \mathbb{R}^d$ and $b_j \in \{+1,-1\}$,
we need to find a vector $x = [y^T~\boldsymbol{\xi}^T]^T \in \mathbb{R}^{d+n}$ that solves the following optimization problem (a bias term is included in $y$ for convenience):
\begin{align}\label{eqn:nonsep}
\min_{y,\boldsymbol{\xi}} {}& f(y,\boldsymbol{\xi}) = \frac{1}{2}\|y\|^2 + C\sum_{j=1}^n \xi_j\\
\text{s.t. } {}& b_j \langle y, a_j \rangle \geq 1 - \xi_j,~
\xi_j \geq 0, \hbox{ for all } j \in \{1,\ldots,n\}. \nonumber
\end{align}
Here, we use slack variables $\xi_j$, for $j = 1, \ldots, n$, to consider linearly non-separable cases as well.
If the optimal solution $(y^*,\boldsymbol{\xi}^*)$ to this problem exists, the solution $y^*$ is the maximum-margin separating hyperplane \cite{Vapnik:1995}.


For applying DRP to problem (\ref{eqn:nonsep}),
we can define $f_i$ and $\mathcal{X}_i$, as follows:
\[
f_i(x) = \frac{1}{2m}\|y\|^2 + C\sum_{j \in I_i} \xi_j,
\]
\[
\mathcal{X}_i = \{x \in \mathbb{R}^{d+n}~|~ b_j\langle y,a_j\rangle
\geq 1 - \xi_j,~ \xi_j \geq 0,~  \forall j \in I_i\}.
\]
where $I_i$ is a set of indices such that $\cup_{i=1}^m I_i = \{1,\ldots,n\}$, $I_i \cap I_j = \emptyset$ for $i \neq j$ and $j \in I_i$ if and only if $\mathcal{X}_i$ contains inequalities associated with the data $(a_j,b_j)$.
Note that each set
$\mathcal{X}_{i}^{j}=\{x \in \mathbb{R}^{d+n}~|~ b_j\langle y,a_j\rangle
\geq 1 - \xi_j,~ \xi_j \geq 0\}$ is the intersection of two halfspaces,
the projection onto which can be computed in a few steps (see Appendix~\ref{app:proj}).

\subsection{Simulations}  \label{subsec:sim}
In the section, we perform some experiments with our DRP algorithm.
We refer to our DRP algorithm applied on SVMs as \textsf{DrSVM}.
The purpose of the experiments is to verify the convergence and to show in how many iterations the proposed method can actually arrive at consensus in distributed settings.
We use the DRP algorithm in~(\ref{eqn:algo3})-(\ref{eqn:algo4}) and
its variant in~(\ref{eqn:algo3batch})-(\ref{eqn:algo4batch}) with
the stepsize $\alpha_k = \frac{1}{k+1}$ for $k \ge 0$.
We vary the number of batches $b$ as 1, 100 or 1000 to observe the different convergence speed,
where $b=1$ corresponds to the algorithm in~(\ref{eqn:algo3})-(\ref{eqn:algo4}).
To show the effect of connectivity, we compare two different time invariant network topologies, i) a completely connected graph (clique) and ii) a 3-regular expander graph. The 3-regular expander graph is a sparse graph that has strong connectivity with every node having degree 3.

We use 3 text classification data sets for our experiments. The data sets were kindly provided by Thorsten Joachims (see \cite{Joachims:2006} for their descriptions).
Table \ref{tbl:stat} lists the statistics of the data sets.
All of the data sets are from binary document classification.
Since the data sets used here are very unlikely separable,
we use the formulation (\ref{eqn:nonsep}) with $C = 1$.
In each experiment the number $n$ of constraints is divided among the agents equally
(if $n$ is not divisible by $m$, the $m$-th agent gets the remainder).
To estimate the generalization (or testing) performance, we split the data and use 80\% for training and 20\% for testing.

\textsf{DrSVM} is implemented with C/C++ and all experiments were performed on a 64-bit machine running Fedora 16 with an Intel Core 2 Quad Processor Q9400 and 8G of RAM.
The experiments are not performed on a real networked environment so we do not consider delays and node/link failures that may exist in networks.

\begin{table}
\centering
\caption{
The statistics of three text classification data sets: $n$ is the number of examples and $d$ is the number of features. $s$ represents the sparsity of data.
\label{tbl:stat}
}
\begin{tabular}{|c||ccc|}\hline
\multirow{2}{*}{Data set}& \multicolumn{3}{|c|}{Statistics} \\
& $n$ & $d$ & $s$ \\\hline
\texttt{astro-ph} & 62,369 & 99,757 & 0.08\% \\\hline
\texttt{CCAT} & 804,414 & 47,236 & 0.16\% \\\hline
\texttt{C11} & 804,414 & 47,236 & 0.16\% \\\hline
\end{tabular}
\end{table}

For stopping criteria, we first run a centralized random incremental projection \cite{AN2011} on the 80\% training set with $b=1$
until the relative error of objective values in two consecutive iterations is less than 0.001. i.e.,
\[
|f(x(k)) - f(x(k+1))|/f(x(k)) < 0.001.
\]
We then measure the test accuracy of the final solution on the remaining 20\% test set,
which will become the target test accuracy $t_{acc}$.
For experiments in the distributed setting, we measure the test accuracy of every agent's solution at
the end of every iteration.
If every solution at certain iteration satisfies the target value $t_{acc}$, we conclude that the agents arrived
at a consensus and the algorithm converged.
The maximum number of iterations in each simulation is limited to 20,000.

Table \ref{tbl:res} shows the results.
As we do more projections per iteration, the total number of iterations required for convergence is less, regardless of the number of agents.
For the given stopping criteria, it seems that less iterations are needed for \textsf{DrSVM} to converge as the number of agents increases.
We can also observe the effect of network connectivity.
When all the other parameters ($m$ and $b$) are the same, for most of the cases,
the number of iterations required for the 3-regular expander graph to converge is greater or equal
to that for the clique.

The table reports the number of iterations required for all the agents to achieve the target test accuracy.
Therefore, the total number of projections is \textit{at most} the number of iterations times $m$ times $b$.
This is because no projection is required if the current estimate is already in the selected constraint component.
For example, the total number of projections for \texttt{astro-ph} with $m=6$ and $b=100$ is at most
$4,800 (= 8\times6\times100)$.

The runtime (or the number of calculations) of the algorithm is not only proportional to the number of projections, but also to the number of gradient updates.
For example, for \texttt{astro-ph} with $m=6$ and $b=1$, the total number of projections is $4,170(=695\times6\times1)$, while the total number of gradient updates is $4,170 (=695\times6)$.
For the same example with  $m=6$ and $b=100$,
the total number of projections is $4,800 (=8\times6\times100)$,
but the total number of gradient updates is only $48 (=8\times6)$.
In any case, the numbers are much smaller than the number 62,369 of the training data points.
This shows that \textsf{DrSVM} can quickly find a good quality solution before examining the training samples even once.

To show the convergence (and consensus) of the algorithm, we plot in Figure \ref{fig:astro} the objective value $f(x)$ of centralized random projection (CRP) and DRP with 10 agents for example \texttt{astro-ph}.
Note that we plot the convergence of the objective value instead of the solution.
This is because CRP and DRP may converge to different optimal points as the problem (\ref{eqn:nonsep}) may not have a unique optimal solution.
For Figure \ref{fig:astro}(a) and \ref{fig:astro}(b), we applied the random projection once and 100 times per iteration, respectively.
From the figures, we can observe that the objective values of CRP and the 10 agents in DRP are almost identical. The final objective of Figure \ref{fig:astro}(b) seems smaller than that of \ref{fig:astro}(a). This is because the stepsize at iteration 1000 is too small.

\begin{center}
\begin{figure}[t]
\subfigure[$b=1$]{
\includegraphics[scale=0.4]{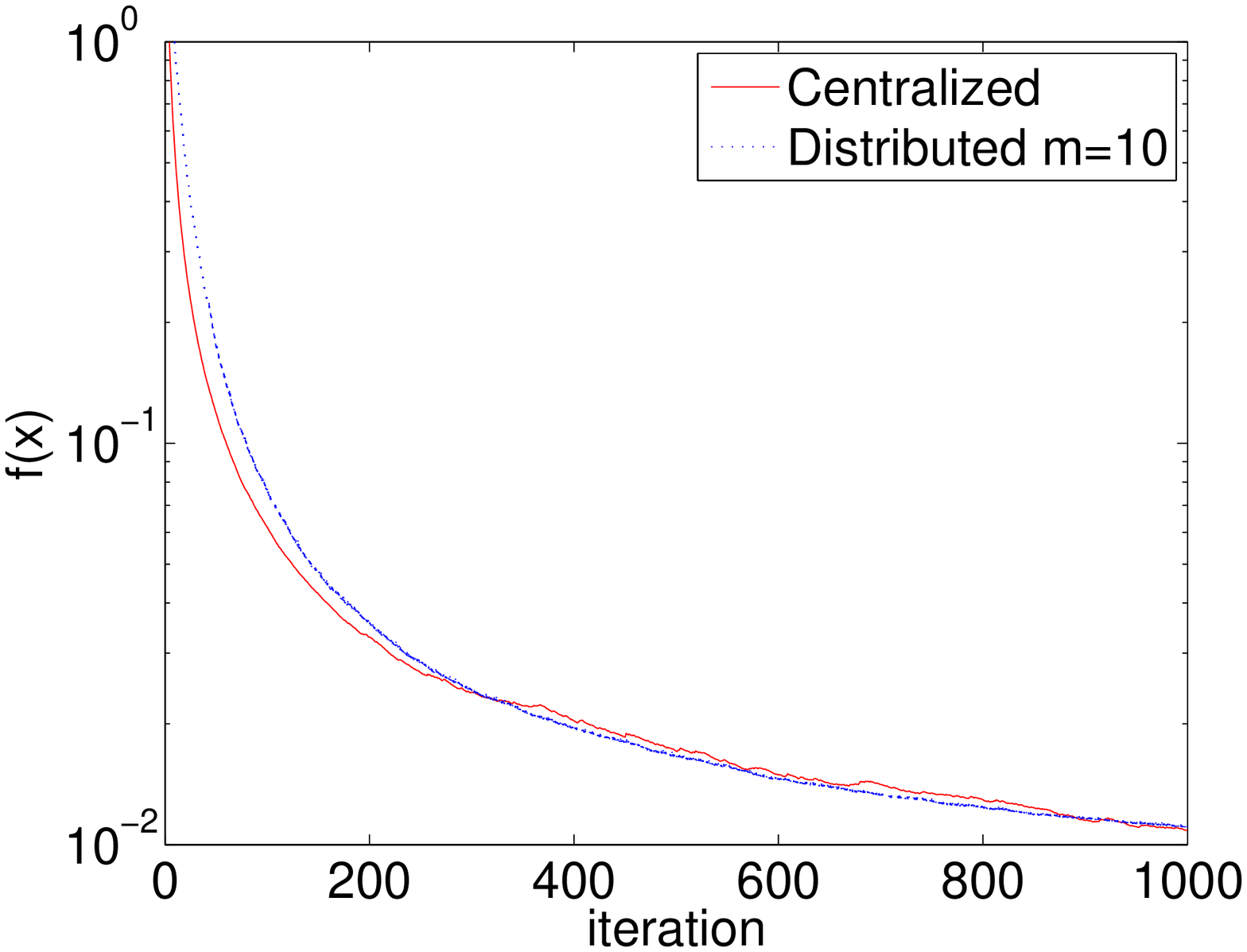}
}
\subfigure[$b=100$]{
\includegraphics[scale=0.4]{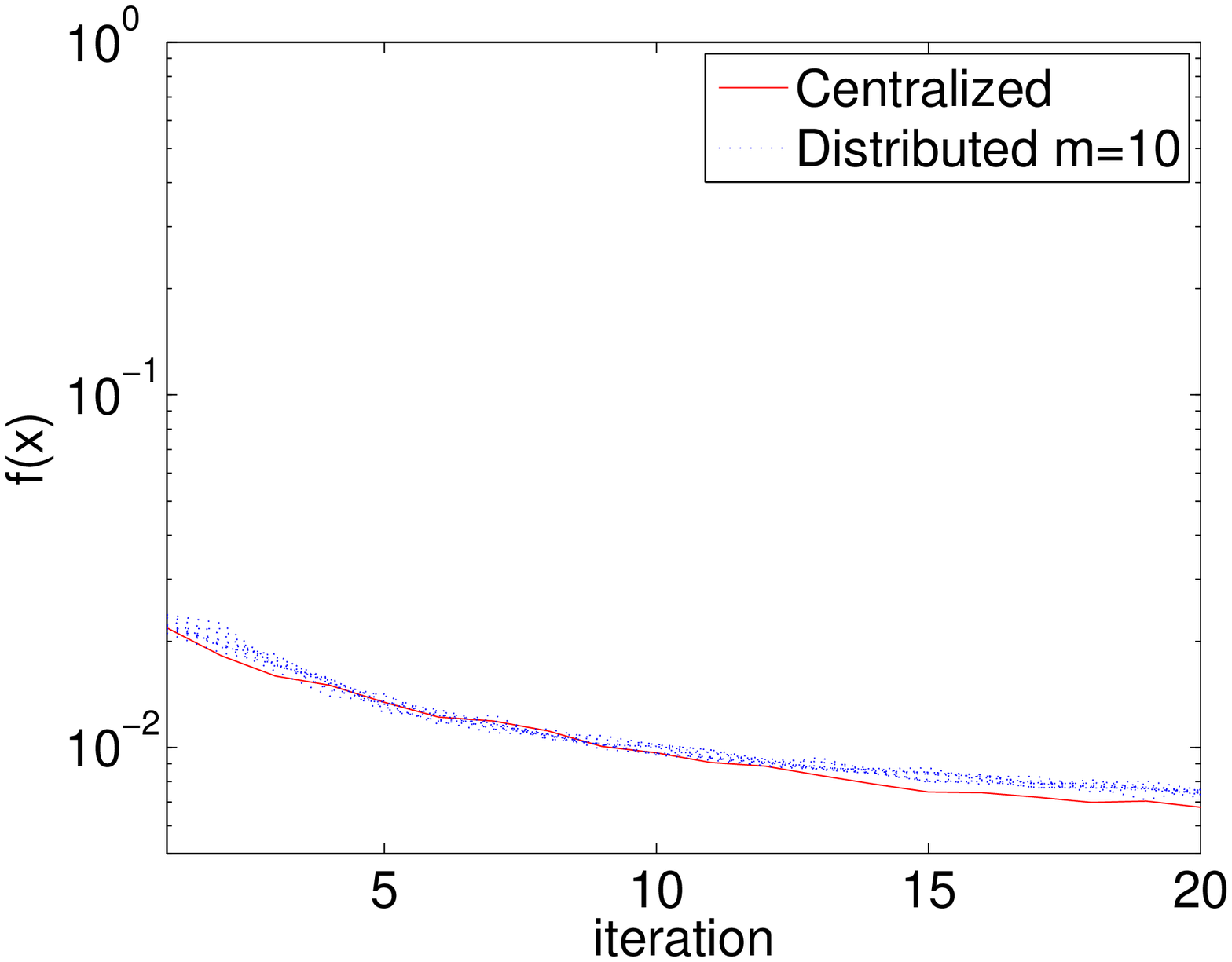}
}
\caption{$f(x)$ vs iteration on \texttt{astro-ph} with 10 agents when batch size $b$ is 1 and 100.  \label{fig:astro}}
\end{figure}
\end{center}

\begin{table*}
\centering
\caption{
The results of \textsf{DrSVM} with two different graph topologies (clique and 3-regular expander graph) and three different numbers of agents ($m=2,6,10$): $t_{acc}$ is the target test accuracy and
$b$ is the number of projections per iteration.
The table shows the number of iterations for all agents to reach the target test accuracy,
where `-' indicates that the algorithm did not converge within the 20,000 maximum iteration limit.
\label{tbl:res}
}
\begin{tabular}{|c|c||c|c|c|c||c|c|}\hline
\multirow{2}{*}{Data set} & \multirow{2}{*}{$t_{acc}$} & \multirow{2}{*}{$b$} & \multirow{2}{*}{$m=2$} & \multicolumn{2}{|c||}{Clique} & \multicolumn{2}{|c|}{3-regular expander}\\\cline{5-8}
& & & & $m=6$ & $m=10$  & $m=6$& $m=10$\\\hline
\multirow{3}{*}{\texttt{astro-ph}}& \multirow{3}{*}{0.95} & 1 & 1,055  & 695  & 697  & 695  & -  \\
& & 100 & 11  & 8  & 11  & 11  & 11   \\
& & 1000 & 2  & 2  & 2  & 2  & 2  \\\hline
\multirow{3}{*}{\texttt{CCAT}} & \multirow{3}{*}{0.91} & 1 & 752  & 511  & 362  & 517  & -  \\
& & 100 & 11  & 10  & 8  & 10  & 8  \\
& & 1000 & 2  & 3  & 2  & 3  & 3  \\\hline
\multirow{3}{*}{\texttt{C11}} & \multirow{3}{*}{0.97} & 1 & 1,511  & 1,255  & 799  & 1,226 & -  \\
& & 100 & 16  & 17 & 12  & 17  & 15  \\
& & 1000 & 2  & 2  & 2  & 2 & 2  \\\hline
\end{tabular}
\end{table*}

\section{Conclusions  \label{sec:con}}
We have proposed and analyzed a distributed gradient algorithm with random incremental projections for a network of agents with time-varying connectivity.
We considered the most general cases, where each agent has a unique and different objective and constraint.
The proposed algorithm is applicable to problems where the whole constraint set is not known in advance but its component is revealed in time, or where the projection onto the whole set is computationally prohibitive.
We have established almost sure convergence of the algorithm when the objective is convex under typical assumptions.
Also, we have provided a variant of the algorithm using a mini-batch of consecutive projections and established its convergence in almost sure sense.
Experiments on three text classification benchmarks using SVMs were performed to verify the performance of the proposed algorithm.

Future work includes some extensions of the distributed model proposed here.
First, we have assumed the gradients evaluated have no errors. We can consider the effects of stochastic gradient errors in the future analysis.
Second, more robust algorithms can be developed to also handle asynchronous networks with communication delays, noise and/or failures in links/nodes.
Third, an implementation of a real parallel computing environment will be needed to
handle large-scale data sets.

\appendix

\subsection{Proof of Proposition~\ref{prop:p2}}\label{app:p2proof}
We construct the proof by adjusting the result of Lemma~\ref{lem:first},
and by verifying that Lemma~\ref{lemma:key} and Lemma~\ref{lem:disagree} apply to the mini-batch
variant of the DRP method. The basic insight that guides the proof is that the operation
of successive projections on components $\Xc_i^j$ of the set $\Xc_i=\cap_{j\in I_i}\Xc_j^j$
remains a non-expansive operation
with respect to points that belong to the set $\Xc_i$, as well as
with respect to the points in the intersection set $\Xc=\cap_{i=1}^m\Xc_i$.

\subsubsection{Basic Iterate Relation for Mini-Batch Algorithm}
For the iterates generated by the mini-batch random projection algorithm
in~\eqref{eqn:algo3batch}--\eqref{eqn:algo4batch}, we have the following basic result.
\begin{lemma} \label{lemma:first-batch}
Let Assumption~\ref{assume:f} hold.
 Then, for any $\check{x}\in \Xc$, and for all $i\in V$ and all $k\ge0$,
\begin{align*}
\|{}&x_i(k+1)-\check{x}\|^2
\le  (1+A_{\tau}\alpha_k^2)\|v_i(k)-\check{x}\|^2 \nonumber\\
{}&  - 2\alpha_k(f_i(z)-f_i(\check{x}))- \frac{3}{4}\|\psi_i^1(k)- v_i(k)\|^2\\
{}& + \left(\frac{3}{8\tau}+2\alpha_k L\right)\|v_i(k)-z\|^2
+ B_{\tau}\alpha^2_kG_f^2,
\end{align*}
where $A_{\tau} = 8L^2 + 16\tau L^2$, $B_{\tau} = 8\tau+8$ and $\tau >0 $ is arbitrary.
\end{lemma}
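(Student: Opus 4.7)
The plan is to reduce Lemma~\ref{lemma:first-batch} to the single-projection basic relation in Lemma~\ref{lem:first} by exploiting the fact that successive projections onto convex sets $\Xc_i^{\Omega_i^r(k)}$, $r=1,\dots,b$, form a non-expansive operator with respect to any point $\check x$ lying in the intersection of those sets. Since $\check x \in \Xc \subseteq \Xc_i = \bigcap_{j\in I_i}\Xc_i^j \subseteq \Xc_i^{\Omega_i^r(k)}$ for every realization of $\Omega_i^r(k)$ and every $r$, the projection theorem (Lemma~\ref{lem:proj}(a)) applied $b-1$ times yields
\[
\|x_i(k+1)-\check x\|
=\|\psi_i^b(k)-\check x\|
\le \|\psi_i^{b-1}(k)-\check x\|
\le \dots
\le \|\psi_i^1(k)-\check x\|.
\]
Thus it suffices to bound $\|\psi_i^1(k)-\check x\|^2$, which is exactly the quantity handled by the basic single-projection lemma.

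Next, I will apply Lemma~\ref{lem:first} with the identification $\mathcal{Y}=\Xc_i^{\Omega_i^1(k)}$, $\phi=f_i$, $x=v_i(k)$, $\alpha=\alpha_k$, and $y=\psi_i^1(k)=\mathsf{\Pi}_{\Xc_i^{\Omega_i^1(k)}}[v_i(k)-\alpha_k\nabla f_i(v_i(k))]$. This immediately yields, for any $\check x\in\Xc$ (hence in $\Xc_i^{\Omega_i^1(k)}$) and any $z\in\mathbb{R}^d$,
\begin{align*}
\|\psi_i^1(k)-\check x\|^2
\le{}& (1+A_\tau\alpha_k^2)\|v_i(k)-\check x\|^2
 - 2\alpha_k\bigl(f_i(z)-f_i(\check x)\bigr)\\
{}& - \tfrac{3}{4}\|\psi_i^1(k)-v_i(k)\|^2
 + \left(\tfrac{3}{8\tau}+2\alpha_k L\right)\|v_i(k)-z\|^2\\
{}& + B_\tau\alpha_k^2\|\nabla f_i(\check x)\|^2.
\end{align*}
Since $\check x\in\Xc$, Assumption~\ref{assume:f}(d) gives $\|\nabla f_i(\check x)\|\le G_f$, so the last term is bounded by $B_\tau\alpha_k^2 G_f^2$. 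Chaining with the non-expansiveness estimate $\|x_i(k+1)-\check x\|^2\le \|\psi_i^1(k)-\check x\|^2$ from the first paragraph gives exactly the claimed inequality.

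There is no real obstacle here beyond recognizing the right ``telescoping'' non-expansiveness step; the subtle point is that the $-\tfrac{3}{4}\|\psi_i^1(k)-v_i(k)\|^2$ term on the right-hand side refers to the \emph{first} intermediate iterate $\psi_i^1(k)$ rather than to $x_i(k+1)=\psi_i^b(k)$. This is acceptable because, in the subsequent adaptation of Lemma~\ref{lemma:key}, the term $\|\psi_i^1(k)-v_i(k)\|$ still lower-bounds the distance $\dist(v_i(k),\Xc_i^{\Omega_i^1(k)})$, and Assumption~\ref{assume:c} applied to a single random index $\Omega_i^1(k)$ is all that is needed to control $\dist^2(v_i(k),\Xc)$ in expectation; the remaining projections $\psi_i^r(k)$, $r\ge 2$, only further decrease the distance to $\check x$ and therefore do not affect the argument. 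With Lemma~\ref{lemma:first-batch} in hand, the proofs of Lemma~\ref{lemma:key} and Lemma~\ref{lem:disagree} go through with only cosmetic changes, and Proposition~\ref{prop:p2} follows by replicating the argument of Proposition~\ref{prop:as}.
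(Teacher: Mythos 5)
Your proposal is correct and follows essentially the same route as the paper's own proof: chain the non-expansiveness of the last $b-1$ projections (valid since $\check x\in\Xc\subseteq\Xc_i^{\Omega_i^r(k)}$ for every $r$) to reduce to $\|\psi_i^1(k)-\check x\|^2$, then apply Lemma~\ref{lem:first} to the first projection step with $\mathcal{Y}=\Xc_i^{\Omega_i^1(k)}$, $x=v_i(k)$, $\phi=f_i$, and bound $\|\nabla f_i(\check x)\|$ by $G_f$ via Assumption~\ref{assume:f}(d). Your added remark about why the residual term $-\frac{3}{4}\|\psi_i^1(k)-v_i(k)\|^2$ involving only the first intermediate iterate suffices downstream also matches how the paper adapts Lemma~\ref{lemma:key}.
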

\begin{proof}
By using the non-expansiveness property of projection operation (Lemma~\ref{lem:proj}(a)),
we have for arbitrary $\check{x}\in \Xc$ (since $\Xc\subseteq \Xc_i^j$ for all $j\in I_i$),
and for all $i\in V$ and $k\ge0$,
\begin{align}\label{eqn:b1}
\|x_i(k+1){}&-\check{x}\|\le \|\psi_i^{b-1}(k)-\check{x}\|\nonumber\\
{}&\le\cdots\le \|\psi_i^1(k)-\check{x}\|.
\end{align}
The intermediate iterate $\psi^1_i(k)$ is just obtained after one projection step,
\[\psi_i^1(k)= \mathsf{\Pi}_{\mathcal{X}_i^{\Omega^1_i(k)}} [v_i(k)-\alpha_k\nabla f_i(v_i(k))],\]
so it satisfies Lemma~\ref{lem:first} with $y=\psi^1_i(k)$, $\mathcal{Y}=\Xc_i^{\Omega_i^1(k)}$,
$x=v_i(k)$, $\alpha=\alpha_k$, and $\phi=f_i$. Thus, we have
for any $\check{x} \in \mathcal{X}$ and $z \in \mathbb{R}^d$,
\begin{align}\label{eqn:b2}
{}&\|\psi_i^1(k)-\check{x}\|^2
\le  (1+A_{\tau}\alpha_k^2)\|v_i(k)-\check{x}\|^2\nonumber\\
{}&  - 2\alpha_k(f_i(z)-f_i(\check{x}))- \frac{3}{4}\|\psi_i^1(k)-v_i(k)\|^2\nonumber\\
{}& +\hspace{-0.5mm} \left(\hspace{-0.5mm}\frac{3}{8\tau}\hspace{-0.5mm}+\hspace{-0.5mm}2\alpha_k L\hspace{-0.5mm}\right)\hspace{-0.5mm}\|v_i(k)\hspace{-0.5mm}-\hspace{-0.5mm}z\|^2
\hspace{-0.5mm}+\hspace{-0.5mm} B_{\tau}\alpha^2_k\|\nabla f_i(\check{x})\|^2.
\end{align}
From~\eqref{eqn:b1} and~\eqref{eqn:b2}, by using the gradient boundedness property of
Assumption~\ref{assume:f}(d), we obtain the stated relation.
\end{proof}

\subsubsection{Conditional Expectation Relation for Mini-Batch Algorithm}
The convergence proof of Proposition~\ref{prop:p2}
requires a relation for the iterates involving expectations with respect to the past history
of the method. For this, we need to define a relevant $\sigma$-algebra.
We let $\mathcal{\tilde F}_k$ be the $\sigma$-algebra generated by the entire history of
the algorithm up to time $k-1$ inclusively. Thus, $\mathcal{\tilde F}_k$ includes
the realizations of all the random variables but not the realizations of the indices
$\Omega_i^1(k),\ldots,\Omega_i^b(k)$ at time $k$. Specifically, it is given by for all $k \ge 1$,
\begin{align*}
\tilde \Fc_k = & \{x_i(0), i \in V\}  \\
& \cup\{\Omega_i^r(\ell); 0\le \ell \le k-1, 1\le r\le b, i \in V\}
\end{align*}
where $\tilde\Fc_0 = \{x_i(0), i \in V\}$.

Now, with this definition of the $\sigma$-algebra, we have the following result.

\begin{lemma} \label{lemma:exp-batch}
Let Assumptions~\ref{assume:f} and~\ref{assume:c} hold. Then, almost surely
for any $\check{x}\in \Xc$, and for all $i\in V$ and all $k\ge0$,
\begin{align*}
\mathsf{E}\Big[\|{}&x_i(k+1)-\check{x}\|^2\mid\tilde \Fc_k\Big]
\le  (1+A\alpha_k^2)\|v_i(k)-\check{x}\|^2\\
{}&  - 2\alpha_k(f_i( z_i(k)) - f_i(\check{x}))\\
{}& -\left(\frac{3}{8c}-2\alpha_k L\right)\dist^2(v_ix(k),\Xc)
+ B\alpha^2_kG_f^2.
\end{align*}
where  $z_i(k)=\mathsf{\Pi}_{\mathcal{X}}[v_i(k)]$,
$A = 8L^2 + 16c L^2$, $B = 8c+8$, and $c$ is from Assumption~\ref{assume:c}.
\end{lemma}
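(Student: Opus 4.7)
The plan is to apply Lemma~\ref{lemma:first-batch} with the specific choices $\tau = c$ (the constant from Assumption~\ref{assume:c}) and $z = z_i(k) \triangleq \mathsf{\Pi}_{\mathcal{X}}[v_i(k)]$, take conditional expectations with respect to $\tilde{\Fc}_k$, and then use Assumption~\ref{assume:c} to dominate the negative term $-\frac{3}{4}\|\psi_i^1(k)-v_i(k)\|^2$ by a multiple of $\dist^2(v_i(k),\Xc)$. With this choice of $z$ we have $\|v_i(k)-z\|^2 = \dist^2(v_i(k),\Xc)$, the $A_\tau, B_\tau$ constants collapse to the stated $A$ and $B$, and the gradient bound $\|\nabla f_i(\check{x})\| \le G_f$ from Assumption~\ref{assume:f}(d) absorbs the $\|\nabla f_i(\check{x})\|^2$ factor into $G_f^2$ (since $\check{x}\in\Xc$).

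The linchpin of the argument is the observation that the successive projection step $\psi_i^1(k) = \mathsf{\Pi}_{\Xc_i^{\Omega_i^1(k)}}[\psi_i^0(k)]$ lands in $\Xc_i^{\Omega_i^1(k)}$, which immediately gives the pointwise lower bound
\[
\|\psi_i^1(k)-v_i(k)\| \ge \dist(v_i(k),\Xc_i^{\Omega_i^1(k)}).
\]
Since $v_i(k)$ is $\tilde{\Fc}_k$-measurable while $\Omega_i^1(k)$ is independent of $\tilde{\Fc}_k$ and has the same distribution as $\Omega_i(k)$, Assumption~\ref{assume:c} applied conditionally yields
\[
\mathsf{E}\bigl[\|\psi_i^1(k)-v_i(k)\|^2 \mid \tilde{\Fc}_k\bigr] \ge \frac{1}{c}\dist^2(v_i(k),\Xc) \quad a.s.
\]
Note that the subsequent projections $\psi_i^2(k), \ldots, \psi_i^b(k) = x_i(k+1)$ are irrelevant here: Lemma~\ref{lemma:first-batch} already accounts for them via the non-expansiveness chain that was used in its derivation, so all we need is control on the first projection step.

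Taking conditional expectation in the bound from Lemma~\ref{lemma:first-batch} and applying the inequality above produces
\[
-\tfrac{3}{4}\,\mathsf{E}\bigl[\|\psi_i^1(k)-v_i(k)\|^2\mid\tilde{\Fc}_k\bigr] + \left(\tfrac{3}{8c}+2\alpha_k L\right)\dist^2(v_i(k),\Xc) \le -\left(\tfrac{3}{8c}-2\alpha_k L\right)\dist^2(v_i(k),\Xc),
\]
since $\tfrac{3}{4c} - \tfrac{3}{8c} = \tfrac{3}{8c}$. Combining this with the remaining terms yields precisely the stated inequality.

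Conceptually this result is a mild adaptation of the argument already used in the proof of Lemma~\ref{lemma:key}, and I do not expect any substantial obstacle. The only point requiring a bit of care is the measurability bookkeeping: one must verify that $v_i(k)$ and $z_i(k)$ are $\tilde{\Fc}_k$-measurable (they are, since they depend only on the initial conditions and on $\Omega_i^r(\ell)$ for $\ell \le k-1$), which is exactly what justifies the conditional use of Assumption~\ref{assume:c}. Once that is in place, the computation is mechanical.
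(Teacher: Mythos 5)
Your proposal is correct and follows essentially the same route as the paper's own proof: apply Lemma~\ref{lemma:first-batch} with $\tau=c$ and $z=z_i(k)$, lower-bound $\|\psi_i^1(k)-v_i(k)\|$ by $\dist(v_i(k),\Xc_i^{\Omega_i^1(k)})$ since $\psi_i^1(k)$ lies in that set, and then invoke Assumption~\ref{assume:c} conditionally on $\tilde\Fc_k$ to absorb the projection term into $-\frac{1}{c}\dist^2(v_i(k),\Xc)$. The measurability remark and the arithmetic combining the coefficients match the paper's argument exactly.
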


\begin{proof}
By letting $z=z_i(k)$ and $\tau=c$ in Lemma~\ref{lemma:first-batch},
we obtain
\begin{align*}
{}&\mathsf{E}\Big[\|x_i(k+1)-\check{x}\|^2\mid\tilde \Fc_k\Big]
\le  (1+A\alpha_k^2)\|v_i(k)-\check{x}\|^2\\
{}&  - 2\alpha_k(f_i( z_i(k) ) \hspace{-0.5mm}- \hspace{-0.5mm}f_i(\check{x}))
 \hspace{-0.5mm}-\hspace{-0.5mm} \frac{3}{4}\hspace{-0.5mm}\EXP{\|\psi_i^1(k)- v_i(k)\|^2\mid \tilde \Fc_k}\\
{}& + \left(\frac{3}{8c}+2\alpha_k L\right)\|v_i(k)-z_i(k)\|^2
+ B\alpha^2_kG_f^2,
\end{align*}
where $A = 8L^2+16cL^2$ and $B=8c+8$.

Since $\psi_i^1(k)\in\Xc_i^{\Omega^1_i(k)}$, by the projection property we have
$\|\psi_i^1(k)- v_i(k)\|^2\ge\|\mathsf{\Pi}_{\mathcal{X}^{\Omega^1_i(k)} }[v_i(k)]-v_i(k)\|^2.$
Then,
\begin{align*}
\mathsf{E}{}&\Big[\|\psi_i^1(k)- v_i(k)\|^2\tilde \Fc_k\Big]\\
{}&\ge\EXP{\|\mathsf{\Pi}_{\mathcal{X}^{\Omega^1_i(k)} }[v_i(k)]-v_i(k)\|^2 \mid \tilde \Fc_k}\\
{}&=\EXP{\|\mathsf{\Pi}_{\mathcal{X}^{\Omega^1_i(k)} }[v_i(k)]-v_i(k)\|^2\mid v_i(k)}.
\end{align*}
Furthermore, by Assumption~\ref{assume:c} we have
\[\EXP{\|\mathsf{\Pi}_{\mathcal{X}^{\Omega^1_i(k)} }[v_i(k)]\hspace{-0.5mm}-\hspace{-0.5mm}v_i(k)\|^2\hspace{-0.5mm}\mid\hspace{-0.5mm} v_i(k)}
\hspace{-0.5mm}\ge\hspace{-0.5mm} \frac{1}{c}\dist^2(v_i(k),\Xc).\]
The preceding relations and $\dist(v_i(k),\Xc)=\|v_i(k) -z_i(k)\|$ yield the desired relation.
\end{proof}

\subsubsection{Lemma~\ref{lemma:key} and Lemma~\ref{lem:disagree} hold}
Using Lemma~\ref{lemma:exp-batch}, we argue that the results
of Lemma~\ref{lemma:key} and Lemma~\ref{lem:disagree} apply to the mini-batch random
projection algorithm.
\begin{claim}
Lemma~\ref{lemma:key} holds for the iterates generated by method~\eqref{eqn:algo3batch}--\eqref{eqn:algo4batch}.
\end{claim}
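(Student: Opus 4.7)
The plan is to follow the proof of Lemma~\ref{lemma:key} essentially verbatim, substituting Lemma~\ref{lemma:exp-batch} for the direct application of Lemma~\ref{lem:first}. Setting $\check{x}=z_i(k)\triangleq\mathsf{\Pi}_{\Xc}[v_i(k)]$ in Lemma~\ref{lemma:exp-batch}, which is admissible since $z_i(k)\in\Xc$, the functional-difference term $-2\alpha_k(f_i(z_i(k))-f_i(\check{x}))$ drops out, leaving a bound on $\mathsf{E}[\|x_i(k+1)-z_i(k)\|^2\mid\tilde\Fc_k]$ that has exactly the same shape as the one that opens the original proof: a growth term $(1+A\alpha_k^2)\dist^2(v_i(k),\Xc)$, a negative contraction term $-\bigl(\tfrac{3}{8c}-2\alpha_k L\bigr)\dist^2(v_i(k),\Xc)$, and an $O(\alpha_k^2)$ residual.

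Next I would use the projection bound $\dist(x_i(k+1),\Xc)\le\|x_i(k+1)-z_i(k)\|$ to pass to a conditional-expectation estimate for $\dist^2(x_i(k+1),\Xc)$, and then apply the convexity of $x\mapsto\dist^2(x,\Xc)$ together with $v_i(k)=\sum_j[W(k)]_{ij}x_j(k)$ to upper bound $\dist^2(v_i(k),\Xc)$ by $\sum_j[W(k)]_{ij}\dist^2(x_j(k),\Xc)$ inside the growth term. Summing over $i\in V$ and invoking the column-sum property of the doubly stochastic matrix $W(k)$ then yields the recursion
\[
\mathsf{E}\Big[\sum_{i=1}^m\dist^2(x_i(k+1),\Xc)\,\Big|\,\tilde\Fc_k\Big]\le(1+A\alpha_k^2)\sum_{j=1}^m\dist^2(x_j(k),\Xc)-\Big(\tfrac{3}{8c}-2\alpha_k L\Big)\sum_{i=1}^m\dist^2(v_i(k),\Xc)+mB\alpha_k^2G_f^2.
\]

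Since $\sum_k\alpha_k^2<\infty$ forces $\alpha_k\to 0$, there exists $\bar k$ such that $\tfrac{3}{8c}-2\alpha_k L>0$ for all $k\ge\bar k$, and Theorem~\ref{thm:super} applied to the delayed process starting at $\bar k$ delivers $\sum_k\dist^2(v_i(k),\Xc)<\infty$ almost surely for every $i\in V$, which is exactly the conclusion of Lemma~\ref{lemma:key}. The main obstacle is really only the bookkeeping needed to derive Lemma~\ref{lemma:exp-batch} itself; once the inner successive projections on the sets $\Xc_i^{\Omega_i^r(k)}$ are absorbed via their non-expansiveness with respect to any point of $\Xc\subseteq\Xc_i^{\Omega_i^r(k)}$, the remaining ingredients (convexity of the squared distance, double stochasticity of $W(k)$, and the Robbins--Siegmund theorem) are identical to the single-projection case and require no modification.
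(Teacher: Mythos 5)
Your proposal is correct and matches the paper's own argument: the paper likewise sets $\check{x}=\mathsf{\Pi}_{\mathcal{X}}[v_i(k)]$ in Lemma~\ref{lemma:exp-batch} so that the functional-difference term vanishes, bounds the left side below by $\dist^2(x_i(k+1),\Xc)$ to recover relation~\eqref{eqn:e03}, and then observes that the remainder of the proof of Lemma~\ref{lemma:key} (convexity of the squared distance, double stochasticity, and the supermartingale theorem) carries over unchanged.
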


\begin{proof}
By letting $\check{x}=\mathsf{\Pi}_{\mathcal{X}}[v_i(k)]$ in Lemma~\ref{lemma:exp-batch},
and noting that $\|x_i(k+1)-\mathsf{\Pi}_{\mathcal{X}}[v_i(k)]\|\ge\dist(x_i(k+1),\Xc)$
and $\|v_i(k) -\mathsf{\Pi}_{\mathcal{X}}[v_i(k)]\|=\dist(v_i(k),\Xc)$,
we obtain
\begin{align*}
\mathsf{E}{}&\Big[\dist^2(x_i(k+1),\Xc)\mid\tilde \Fc_k\Big]
\hspace{-0.5mm}\le \hspace{-0.5mm} (1\hspace{-0.5mm}+\hspace{-0.5mm}A\alpha^2)\dist^2(v_i(k),\Xc)\\
{}& -\left(\frac{3}{8c}-2\alpha_k L\right)\dist^2(v_i(k),\Xc)+ B\alpha^2_kG_f^2,
\end{align*}
which is the same as relation~\eqref{eqn:e03} within the proof of Lemma~\ref{lemma:key}.
The rest of the proof of Lemma~\ref{lemma:key} holds exactly as given, and the result of
Lemma~\ref{lemma:key} remains valid.
\end{proof}

\begin{claim}
Lemma~\ref{lem:disagree} holds for the iterates generated by
method~\eqref{eqn:algo3batch}--\eqref{eqn:algo4batch}.
\end{claim}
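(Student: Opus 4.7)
The plan is to trace through the original proof of Lemma~\ref{lem:disagree} and locate the single point where the explicit form of the update $x_i(k+1)$ is used, then patch that step for the mini-batch setting. Inspection reveals that the only such place is the derivation of the error bound~\eqref{eqn:e_final}, namely $\|e_i(k)\| \le (2+\alpha_0 L)\|v_i(k) - z_i(k)\| + \alpha_k G_f$ with $z_i(k) = \mathsf{\Pi}_{\Xc}[v_i(k)]$. Every subsequent step depends only on the decomposition $x_i(k+1) = v_i(k) + e_i(k)$ with $v_i(k)=\sum_j [W(k)]_{ij}\,x_j(k)$, on Lemma~\ref{lemma:key} (whose validity for the mini-batch method has just been established), on $\sum_k \alpha_k^2 <\infty$, on Lemma~\ref{lemma:ram}, and on double stochasticity of $W(k)$. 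None of these ingredients depends on the batch size $b$.

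To recover the same bound for $e_i(k) = \psi_i^b(k) - v_i(k)$, I would exploit that the successive projections on the component sets $\Xc_i^{\Omega_i^r(k)}$ are jointly non-expansive with respect to any common point of $\Xc$. Since $z_i(k)\in \Xc \subseteq \Xc_i^j$ for every $j\in I_i$, applying Lemma~\ref{lem:proj}(a) $b$ times pathwise gives
\[
\|\psi_i^b(k) - z_i(k)\| \le \cdots \le \|\psi_i^0(k) - z_i(k)\| \le \|v_i(k)-z_i(k)\|+\alpha_k\|\nabla f_i(v_i(k))\|,
\]
where the last inequality uses the triangle inequality on $\psi_i^0(k) = v_i(k)-\alpha_k \nabla f_i(v_i(k))$. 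Combining this with $\|e_i(k)\| \le \|\psi_i^b(k)-z_i(k)\|+\|z_i(k)-v_i(k)\|$ and bounding $\|\nabla f_i(v_i(k))\| \le G_f + L\|v_i(k) - z_i(k)\|$ via Assumption~\ref{assume:f}(c)--(d) recovers~\eqref{eqn:e_final} exactly.

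From this point the original argument applies verbatim: squaring the bound and invoking the mini-batch version of Lemma~\ref{lemma:key} together with $\sum_k \alpha_k^2<\infty$ yields $\sum_k \|e_i(k)\|^2<\infty$ a.s., which gives $\sum_k \alpha_k\|e_i(k)\|<\infty$ a.s. by $2ab\le a^2+b^2$. Casting $x_i(k+1)=\sum_j[W(k)]_{ij}\,x_j(k)+e_i(k)$ as a perturbed consensus recursion and invoking Lemma~\ref{lemma:ram} delivers $\sum_k \alpha_k \|x_i(k)-x_j(k)\|<\infty$ a.s., and the concluding double-stochastic averaging identity $\bar v(k)=\frac{1}{m}\sum_\ell x_\ell(k)$ then produces~\eqref{eqn:lem_disagree}.

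No real obstacle is expected. The only subtlety worth checking carefully is that the telescoping non-expansive bound holds pathwise for every realization of $\Omega_i^1(k),\ldots,\Omega_i^b(k)$, so mutual independence of these samples plays no role in that step; the almost-sure conclusions are inherited from the sample-wise deterministic inequality together with the already-established mini-batch version of Lemma~\ref{lemma:key}.
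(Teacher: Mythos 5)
Your proposal is correct and follows essentially the same route as the paper: both arguments chain the non-expansive projection property (Lemma~\ref{lem:proj}(a)) through the $b$ successive projections with respect to the common point $z_i(k)=\mathsf{\Pi}_{\Xc}[v_i(k)]\in\Xc\subseteq\Xc_i^{\Omega_i^r(k)}$, thereby recovering the first inequality of~\eqref{eqn:e_final}, after which the original proof of Lemma~\ref{lem:disagree} applies verbatim. Your added remark that the telescoping bound holds pathwise for every realization of the sampled sets is a correct and worthwhile observation that the paper leaves implicit.
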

\begin{proof}
Define $e_i(k)=x_i(k+1)-v_i(k)$ and
$z_i(k) \triangleq \mathsf{\Pi}_{\mathcal{X}}[v_i(k)]$.
Now, consider $\|e_i(k)\|$ for which we have
\[\|e_i(k)\|\le\|x_i(k+1) - z_i(k)\| +\|z_i(k) - v_i(k)\|.\]
The non-expansiveness projection property and the fact $z_i(k)\in\Xc\subset\Xc_i^{\Omega_i^r(k)}$,
for all $r=1,\ldots,b$, and any realization of these sets imply
\begin{align*}
\|{}&x_i(k+1)-z_i(k)\|\\
{}&\le \|\psi_i^{b-1}(k)-z_i(k)\|\le\cdots\le\|\psi^1_i(k)-z_i(k)\|\\
{}&\le \|v_i(k)-\alpha_k\nabla f_i(v_i(k)) - z_i(k)\|.
\end{align*}
Therefore
\[\|\hspace{-0.5mm}e_i(k)\hspace{-0.5mm}\|\hspace{-0.5mm}\le\hspace{-0.5mm} \|v_i(k)-\alpha_k\hspace{-0.5mm}\nabla\hspace{-0.5mm} f_i(v_i(k)) - z_i(k)\|+\|z_i(k) - v_i(k)\|,\]
which is the same as the first inequality in~\eqref{eqn:e_final} within the proof of
Lemma~\ref{lem:disagree}. The rest of the proof of that lemma holds in verbatim, and the result follows.
\end{proof}

\subsubsection{Details of the Proof of Proposition~\ref{prop:p2}}
We now connect the preceding results and provide the proof of Proposition~\ref{prop:p2}.
Starting from the relation in Lemma~\ref{lemma:exp-batch}, after summing over all $i\in V$,
we can see that almost surely for all $\check{x}\in \Xc$ and all $k\ge0$,
\begin{align*}
\mathsf{E}{}&\Big[\sum_{i=1}^m \|x_i(k+1)-\check{x}\|^2\mid\tilde \Fc_k\Big] \\
{}&\le  (1+A\alpha_k^2)\sum_{i=1}^m\|v_i(k)-\check{x}\|^2\\
{}&  - 2\alpha_k\sum_{i=1}^m(f_i( z_i(k)) - f_i(\check{x}))\\
{}& -\left(\frac{3}{8c}-2\alpha_k L\right)\sum_{i=1}^m\|v_i(k)- z_i(k)\|^2
+ mB\alpha^2_kG_f^2,
\end{align*}
where $z_i(k)=\mathsf{\Pi}_{\mathcal{X}}[v_i(k)]$.

Now, the same as in the proof of Proposition~\ref{prop:as}, using the properties of the matrices $W(k)$
and the convexity of the squared-norm function (see~\eqref{eqn:norm}),
we can show that
\[\sum_{i=1}^m\|v_i(k)-\check{x}\|^2 \le\sum_{j=1}^m\|x_j(k)-\check{x}\|^2.\]
Also, using verbatim arguments, we can show that
\begin{align*}
\sum_{i=1}^m{}&(f_i( z_i(k)) - f_i(\check{x}))\\
{}&\ge -2G_f\sum_{i=1}^m\|v_i(k)- \bar v(k)\|+\left(f(\bar z(k)-f(\check{x})\right),
\end{align*}
where  $\bar z(k)= \frac{1}{m}\sum_{\ell=1}^m z_\ell (k)$ and  $\bar v(k)=\frac{1}{m}\sum_{\ell=1}^m v_\ell (k)$.
Under the conditions of Proposition~\ref{prop:p2}, we have $\alpha_k\to0$.
Choosing $\bar k$ large enough so that $2\alpha_kL \le \frac{3}{8c} $ for all $k\ge \bar k$,
we have
\[-\left(\frac{3}{8c}-2\alpha_k L\right)\sum_{i=1}^m\|v_i(k)- z_i(k)\|^2\le 0.\]
By combining all the preceding relations, we obtain
almost surely for all $\check{x}\in \Xc$ and all $k\ge \bar k$,
\begin{align*}
\mathsf{E}\Bigg[{}&\sum_{i=1}^m\|x_i(k+1)-\check{x}\|^2\mid\tilde \Fc_k\Bigg] \\
\le {}& (1+A\alpha_k^2)\sum_{i=1}^m\|x_i(k)-\check{x}\|^2\\
{}&  - 2\alpha_k( f(\bar z (k)) - f(\check{x})) \\
 {}& +4\alpha_k G_f\sum_{i=1}^m\|v_i(k)- \bar v(k)\|
+ mB\alpha^2_kG_f^2.
\end{align*}
Letting $\check{x}= x^*$ for an arbitrary optimal solution $x^*\in\Xc^*$, from the preceding relation
we arrive at relation~\eqref{eqn:final} in the proof of Proposition~\ref{prop:as}.
From relation~\eqref{eqn:final} onward,
the proof of Proposition~\ref{prop:as} holds verbatim, and the stated almost sure convergence of
the mini-batch method follows.

\subsection{Projection onto the Intersection of Two Half-spaces} \label{app:proj}
Given $v \in \mathbb{R}^d$, we are interested in solving the following optimization problem.
\begin{align}\label{eqn:proj}
\min_{w \in \mathbb{R}^d} {}& ~\frac{1}{2}\|w-v\|^2\\
\text{s.t. } {}& ~\langle a,w \rangle \leq b,~ w_i \geq 0,\nonumber
\end{align}
where $a \in \mathbb{R}^d$, $b \in \mathbb{R}$ and $w_i$ is the $i$-th component of the vector $w$.

The two half-spaces divide the $\mathbb{R}^d$ space into four parts. Therefore, there are only four cases to consider.
\begin{enumerate}
\item $\langle a,v \rangle \leq b$ and $v_i \geq 0$.\\
In this case, $v$ is already in the intersection and $w = v$.
\item $\langle a,v\rangle > b$ and $v_i < 0$. \\
In this case, $v$ is projected onto the intersection of the two hyperplanes
$\{w\mid\langle a,w \rangle=b\}$ and $\{w\mid w_i=0\}$.
Finding such a projection is equivalent to solving the following optimization problem:
\begin{align}\label{eqn:proj2}
\min_{w \in \mathbb{R}^d} {}& ~\frac{1}{2}\|w-v\|^2\\
\text{s.t. } {}& ~\langle a,w \rangle = b,~ w_i = 0.\nonumber
\end{align}
The Lagrangian of the problem (\ref{eqn:proj2}) is
\[
\mathcal{L}(w,\theta,\zeta) = \frac{1}{2}\|w-v\|^2 + \theta\left(\sum_{j=1}^d a_jw_j-b\right) + \zeta w_i,
\]
where $\theta, \zeta \in \mathbb{R}$ are Lagrange multipliers.
Differentiating the Lagrangian and setting it to zero gives the optimality condition,
\[
w_i^*-v_i+a_i\theta^*+\zeta^* = 0,
\]
\[
w_j^*-v_j+a_j\theta^* = 0 ,\quad \text{for } j \neq i.
\]

From the primal feasibility, we have the following relations:
\[
w_i^* = 0 \implies \zeta^* =  v_i-a_i\theta^*,
\]
\[
\sum_{j=1}^n a_jw_j^* = \sum_{j\neq i}a_jw_j^* = \sum_{j\neq i}a_j(v_j - a_j\theta^*) = b
\]
\[
\implies \quad\theta^* = \frac{\sum_{j\neq i}a_jv_j-b}{\sum_{j\neq i}a_j^2}.
\]
Therefore, the projection is given by
\begin{equation*} \label{eqn:cross}
w_j^* = \left\{
\begin{array}{ll}
0& \text{ if } j = i,\\
v_j - a_j\theta^*& \text{ otherwise.}
\end{array} \right.
\end{equation*}

Let $w^* = [w_1^*,\ldots,w_d^*]^T$.

\item $\langle a,v\rangle > b$ and $v_i \geq 0$.\\
In this case, $v$ will be projected either onto the hyperplane
$\{w\mid \langle a,w \rangle = b\}$ or onto the intersection of the two hyperplanes
$\{w\mid \langle a,w \rangle=b\}$ and $\{w\mid w_i=0\}$.
Let $\hat{w}$ be the projection of $v$ onto $\{w\mid \langle a,w \rangle=b\}$, i.e.,
\[
\hat{w} = v - \left(\frac{\langle a,v \rangle-b}{\|a\|^2}\right)a.
\]
The projection of $v$ in this case is given by
\begin{equation*}
w = \left\{
\begin{array}{ll}
\hat{w} & \text{ if } \hat{w}_i \geq 0,\\
w^*& \text{ otherwise.}
\end{array} \right.
\end{equation*}

\item $\langle a,v \rangle \leq b$ and $v_i < 0$.\\
Let $\hat{w}$ be the projection of $v$ onto the hyperplane $\{w\mid w_i=0\}$, i.e.,
\[
\hat{w} = v - \left(v_i-b\right)e_i,
\]
where $e_i \in \mathbb{R}^d$ is the vector whose $i$-th component is one and
all the other components are zero. Then, the projection of $v$ is given by
\begin{equation*}
w = \left\{
\begin{array}{ll}
\hat{w} & \text{ if } \langle a,\hat{w}\rangle \leq b,\\
w^*& \text{ otherwise.}
\end{array} \right.
\end{equation*}

\end{enumerate}

\bibliographystyle{ieeetran}
\bibliography{soomin001}

\end{document}